\theoremstyle{theorem}
\newtheorem{theorem}{Theorem}[section]
\newtheorem{algorithm}[theorem]{Algorithm}
\newtheorem{proposition}[theorem]{Proposition}
\theoremstyle{definition}
\newtheorem{example}[theorem]{Example}
\newtheorem{remark}[theorem]{Remark}
\numberwithin{equation}{section}
\numberwithin{figure}{section}
\DeclareMathOperator*{\argmin}{arg\,min}
\renewcommand{\subset}{\subseteq}
\renewcommand{\hat}{\widehat}
\renewcommand{\epsilon}{\varepsilon}
\def\sign{\text{sign}}
\def\supp{\text{supp}}
\def\<{\langle}
\def\>{\rangle}
\def\({\Big(}
\def\){\Big)}
\def\C{\mathbb{C}}
\def\E{\mathcal{E}}
\def\F{\mathcal{F}}
\def\Q{\mathbb{Q}}
\def\R{\mathbb{R}}
\def\T{\mathbb{T}}
\def\Z{\mathbb{Z}}
\title[Beurling super-resolution]{Super-resolution by means of Beurling minimal extrapolation}
\author{John J. Benedetto \and Weilin Li}
\date{}
\subjclass[2010]{43A25, 46E27, 46N10, 94A08, 94A12} 
\keywords{Super-resolution, minimal extrapolation, total variation optimization}
\address{Norbert Wiener Center, Department of Mathematics, University of Maryland, College Park, MD 20742, USA}
\email{jjb@math.umd.edu}
\address{Norbert Wiener Center, Department of Mathematics, University of Maryland, College Park, MD 20742, USA}
\email{wl298@math.umd.edu}
\begin{document}

\begin{abstract}
	Let $M(\T^d)$ be the space of complex bounded Radon measures defined on the $d$-dimensional torus group 
	$(\R/\Z)^d=\T^d$, equipped with the total variation norm $\|\cdot\|$; and let $\hat\mu$ denote the Fourier 
	transform of $\mu\in M(\T^d)$. We address the super-resolution problem: For given spectral (Fourier transform) 
	data defined on a finite set $\Lambda\subset\Z^d$, determine if there is a unique $\mu\in M(\T^d)$ of minimal norm for which 
	$\hat\mu$ equals this data on $\Lambda$. Without additional assumptions on $\mu$ and $\Lambda$, our main 
	theorem shows that the solutions to the super-resolution problem, which we call \emph{minimal extrapolations}, 
	depend crucially on the set $\Gamma\subset\Lambda$, defined in terms of $\mu$ and $\Lambda$. For example, 
	when $\#\Gamma=0$, the minimal extrapolations are  singular measures supported in the zero set of an analytic 
	function, and when $\#\Gamma\geq 2$, the minimal extrapolations are singular measures supported in the 
	intersection of $\#\Gamma\choose 2$ hyperplanes. By theory and example, we show that the case $\#\Gamma=1$ 
	is different from other cases and is deeply connected with the existence of positive minimal extrapolations. 
	This theorem has implications to the possibility \emph{and} impossibility of uniquely recovering $\mu$ from 
	$\Lambda$. We illustrate how to apply our theory to both directions, by computing pertinent analytical examples. 
	These examples are of interest in both super-resolution and deterministic compressed sensing. Our concept of an \emph{admissibility range} fundamentally connects Beurling's theory of minimal extrapolation \cite{beurling1989balayage,beurling1989interpolation} with Cand\`{e}s and Fernandez-Granda's work 
	on super-resolution \cite{candes2014towards}. This connection is exploited to address situations 
	where current algorithms fail to compute a numerical solution to the super-resolution problem.
\end{abstract}

\maketitle

\section{Introduction}

\subsection{Motivation}
The term \emph{super-resolution} varies depending on the field, and, consequently, there are various types of super-resolution problems. In some situations \cite{park2003super}, super-resolution refers to the process of up-sampling an image onto a finer grid, which is a \emph{spatial interpolation} procedure. In other situations \cite{lindberg2012mathematical}, super-resolution refers to the process of recovering the object's high frequency information from its low frequency information, which is a \emph{spectral extrapolation} procedure. In both situations, the super-resolution problem is ill-posed because the missing information can be arbitrary. However, it is possible to provide meaningful super-resolution algorithms by using \emph{prior knowledge} of the data. We develop a mathematical theory of the spectral extrapolation version, which we simply refer to as the \emph{super-resolution problem}, see Problem (\ref{SR}) for a precise statement. 

To be concrete, our exposition focuses on imaging applications, but super-resolution ideas are of interest in other fields, e.g., \cite{rieke1999spikes,khaidukov2004diffraction}. In such applications, an image is obtained by convolving the object with the \emph{point spread function} of an optical lens. Alternatively, the Fourier transform of the object is multiplied by a \emph{modulation transfer function}. The resulting image's resolution is inherently limited by the Abbe diffraction limit, which depends on the illumination light's wavelength and on the diameter of the optical lens. Thus, the optical lens acts as a low-pass filter, see \cite{lindberg2012mathematical}. The purpose of super-resolution is to use prior knowledge about the object to obtain an accurate image whose resolution is higher than what can be measured by the optical lens.

We mention two specific imaging problems, and they motivate the theory that we develop. 
\begin{enumerate}[(a)]
	\item 
	In astronomy \cite{puschmann2005super}, each star can be modeled as a complex number times a Dirac $\delta$-measure, and the Fourier transform of each star encodes important information about that star. However, an image of two stars that are close in distance resembles an image of a single star. In this context, the super-resolution problem is to determine the number of stars and their locations, using the prior information that the actual object is a linear combination of Dirac $\delta$-measures. 
	
	\item
	In medical imaging \cite{greenspan2009super}, machines capture the structure of the patient's body tissues, in order to detect for anomalies in the patient. Their shapes and locations are the most important features, so each tissue can be modeled as the characteristic function of a closed set, or as a surface measure supported on the boundary of a set. The super-resolution problem is to capture the fine structures of the tissues, given that the actual object is a linear combination of singular measures. 
\end{enumerate}

\subsection{Problem statement}

We model objects as elements of $M(\T^d)$, the space of complex bounded Radon measures on $\T^d=(\R/\Z)^d$, the $d$-dimensional torus group. $M(\T^d)$ equipped with the total variation norm $\|\cdot\|$ is a Banach algebra with unit, the Dirac $\delta$-measure, where multiplication is defined by convolution $\ast$.  
The \emph{Fourier transform} of $\mu\in M(\T^d)$ is the function $\hat{\mu}\colon\Z^d\to\C$, whose $m$-th Fourier coefficient is defined as
\[
\hat{\mu}(m)=\int_{\T^d} e^{-2\pi im\cdot x}\ d\mu(x).
\]
See \cite{benedetto2010integration} for further details.

We consider the \emph{super-resolution problem}: For a given finite subset $\Lambda\subset\Z^d$ and 
given spectral data 
$F$ of the form $\hat\mu\mid_\Lambda$ for some unknown $\mu\in M(\T^d)$, find 
\begin{equation}
\label{SR}
\tag{SR}
\argmin_{\nu\in M(\T^d)} \|\nu\|
\quad \text{subject to}\quad
\hat{\nu}=\hat{\mu} 
\quad\text{on } \Lambda.
\end{equation}
This is a is a \emph{convex minimization problem} and we interpret a solution as a  \emph{simple} or 
\emph{least complicated} high resolution extrapolation of $F.$

\begin{remark}[Image processing]
	\begin{enumerate} [(a)]
		\item 
		A primary objective is to determine if $\mu\in M(\T^d)$ is a solution to the super-resolution problem given its spectral data $\hat\mu\mid_\Lambda$. The current literature has focused on discrete $\mu\in M(\T^d)$, e.g., see \cite{candes2014towards,tang2013compressed,de2012exact}. However, in the context of image processing, $\mu$ is the unknown high resolution image, and a typical image cannot be approximated by a discrete measure. Hence, it is important to determine if non-discrete measures are solutions to the super-resolution problem. For this reason, we have formulated and shall study Problem (\ref{SR}) in an abstract way. 
	
		\item
		Further, in the context of image processing, we can think of being given information that represents an image 
		$\mu\in M(\T^d),$ and this information is in the form, 
		\[
		F(x)
		=(\mu*\psi)(x)
		=\int_{\T^d} \psi(x-y)\ d\mu(y),
		\]
		for some $\psi\colon\T^d\to\C$. For simplicity, we assume that $\hat\psi=\mathds{1}_\Lambda$, the characteristic function of some finite set $\Lambda\subset\Z^d$. Then, we have
		\[
		F
		=(\mu*\psi)
		=(\hat\mu\mid_\Lambda)^\vee.
		\]
		Thus, we interpret $\hat\mu\mid_\Lambda$ as the given low frequency information of $\mu$, and $(\hat{\mu}\mid_\Lambda)^\vee$ as the given low resolution image, even though in applications, we do not know the desired object $\mu$. This is the background for our formulation of Problem (\ref{SR}).
\end{enumerate}
\end{remark}
		
With regard to Problem (\ref{SR}), a fundamental question of uniqueness must be addressed. To see why this is so, if $\mu$ is not the unique solution to the super-resolution problem, then the output of a numerical algorithm is not guaranteed to approximate $\mu$. Thus, it is important to determine sufficient conditions such that $\mu$ is the unique solution. For this reason, we say that \emph{super-resolution reconstruction of $\mu$ from $\Lambda$ is possible}, or \emph{it is possible to super-resolve $\mu$ from $\Lambda$}, if and only if $\mu$ is the unique solution to Problem (\ref{SR}). Of course, it could be theoretically be possible to reconstruct $\mu$ by other means.

\subsection{Background}

Problem (\ref{SR}) was first studied by Cand\`{e}s and Fernandez-Granda \cite{candes2014towards} and was inspired by results from compressed sensing \cite{donoho2006compressed,candes2006robust}. By considering different models of images and/or alternative measurement processes, it is possible to derive related (noiseless) ``super-resolution" problems, e.g., see \cite{donoho1992superresolution, de2012exact, tang2013compressed, aubel2014super, aubel2015theory}. There are several papers that address ``super-resolution" in the presence of noise, e.g., see \cite{candes2013super, bhaskar2013atomic, azais2015spike, tang2015near, duval2015exact}. 

We briefly discuss the important results of Cand{\'e}s and Fernandez-Granda \cite{candes2014towards}. Given $\Lambda = \{-M,-M+1, \dots, M\}^d\subset\Z^d$, we say $\mu=\sum_{k=1}^K a_k\delta_{x_k}\in M(\T^d)$ satisfies the \emph{minimum separation condition with constant $C>0$} if 
\begin{equation}
\label{separation}
\inf_{\substack{1\leq j,k\leq K \\ j\not=k}}\|x_j-x_k\|_{\ell^\infty(\T^d)}
\geq \frac{C}{M}.
\end{equation}

\begin{theorem}[Cand{\`e}s and Fernandez-Granda, Theorems 1.2-1.3, \cite{candes2014towards}. Fernandez-Granda, Theorem 2.2, \cite{fernandez2016super}]
	\label{thm CFG}
	Let $\mu=\sum_{k=1}^K a_k\delta_{x_k}\in M(\T^d)$ and $\Lambda=\{-M,-M+1,\dots,M\}^d\subset\Z^d$. 
	\begin{enumerate}[(a)]
		\item 
		If $d=1$, $M\geq 128$, and $\mu$ satisfies the minimum separation condition with $C=2$, then $\mu$ is the unique solution to Problem (\ref{SR}). If additionally, $\mu$ is real valued, then the constant can be reduced to $C=1.87$.
		\item
		If $d=1$, $M\geq 10^3$, and $\mu$ satisfies the minimum separation condition with $C=1.26$, then $\mu$ is the unique solution to Problem (\ref{SR}).
		\item
		If $d=2$, $\mu$ is real valued, $M\geq 512$, and $\mu$ satisfies the minimum separation condition for $C=2.38$, then $\mu$ is the unique solution to Problem (\ref{SR}).
	\end{enumerate}
\end{theorem}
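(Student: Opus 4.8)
The plan is to prove uniqueness through the \emph{dual certificate} (dual polynomial) method, the standard route for total variation minimization of this type. The key is a sufficient condition for optimality: if there exists a trigonometric polynomial $q(x)=\sum_{m\in\Lambda} c_m e^{2\pi i m\cdot x}$ with spectrum contained in $\Lambda$ such that $q(x_k)=\sign(a_k)$ for every $k$ and $|q(x)|<1$ for every $x\notin\{x_1,\dots,x_K\}$, then $\mu$ is the unique solution to Problem (\ref{SR}). First I would establish this implication. For any feasible $\nu$, writing $h=\nu-\mu$, we have $\hat h=0$ on $\Lambda$, so $\int \bar q\,dh=0$ because $q$ is $\Lambda$-band-limited; hence $\Re\int \bar q\,d\nu=\Re\int \bar q\,d\mu=\sum_k|a_k|=\|\mu\|$. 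Since $|q|\leq 1$ everywhere gives $\|\nu\|\geq\Re\int \bar q\,d\nu$, this yields $\|\nu\|\geq\|\mu\|$, so $\mu$ is a minimizer. For uniqueness, equality forces $\nu$ to be carried by $\{|q|=1\}=\{x_k\}$ with the correct phases, and then matching the Fourier data on $\Lambda$ pins down $\nu=\mu$.

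Having reduced the problem to exhibiting such a $q$, the next step is its explicit construction. I would take the squared Fej\'er kernel $K_M$ (in $d\geq 2$ its tensor product), a nonnegative low-pass kernel with spectrum in $\Lambda$ satisfying $K_M(0)=1$, concentrated near the origin, with rapidly decaying values and derivatives. I would then posit the interpolation ansatz $q=\sum_k \alpha_k\, K_M(\cdot-x_k)+\sum_k \beta_k\cdot\nabla K_M(\cdot-x_k)$ and determine the coefficients $\alpha_k$ and $\beta_k$ by imposing the conditions $q(x_j)=\sign(a_j)$ and $\nabla q(x_j)=0$ (that is, $2K$ conditions when $d=1$). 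The vanishing-gradient condition is crucial: it forces each $x_j$ to be a genuine local maximum of $|q|$ at height $1$, which is what makes the strict inequality off the support attainable. Solvability of this linear system is proved by showing the coefficient matrix is a small perturbation of the identity, hence diagonally dominant; here the minimum separation condition $\|x_j-x_k\|\geq C/M$ enters, since the kernel decay renders all off-diagonal interactions small once the points are sufficiently separated.

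The main obstacle, and the source of the explicit constants $C=2$, $M\geq 128$, and so on, is verifying that $|q(x)|<1$ for all $x$ off the support. I would partition $\T^d$ into near regions (small balls around each $x_k$) and the complementary far region. On each near region a second-order Taylor argument using $q(x_k)=\sign(a_k)$, $\nabla q(x_k)=0$, and a uniform negative bound on the relevant Hessian of $|q|^2$ shows $|q|<1$ except exactly at $x_k$. On the far region I would bound $|q|$ directly by summing the tail estimates of $K_M$ and $\nabla K_M$ over the separated points. Both estimates demand sharp quantitative control of the kernel and its derivatives, and it is precisely the interplay between these tail bounds and the separation constant that dictates how large $C$ and $M$ must be. For the real-valued refinements in (a) and (c) one exploits the symmetry $a_{-k}=\overline{a_k}$ to work with a real, even certificate, tightening the estimates and lowering the admissible constant; the two-dimensional case (c) additionally requires the more delicate tensor-product kernel estimates of Fernandez-Granda.
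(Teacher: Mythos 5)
This is one statement in the paper that has no proof to compare against: Theorem \ref{thm CFG} is imported verbatim from Cand\`{e}s--Fernandez-Granda \cite{candes2014towards} and Fernandez-Granda \cite{fernandez2016super}, and the authors cite it as background rather than prove it. Measured against the proofs in those cited sources, your outline is faithful: the sufficiency of a dual certificate ($\Lambda$-band-limited $q$ with $q(x_k)=\sign(a_k)$ and $|q|<1$ off the support), the construction of $q$ from translates of the squared Fej\'{e}r kernel together with its gradient, invertibility of the interpolation system by diagonal dominance under the separation condition, and the near/far partition of $\T^d$ for the strict bound $|q|<1$ are exactly the architecture of the original argument.

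Two caveats, one substantive. First, the substance of this theorem \emph{is} the explicit numerology --- $C=2$ with $M\geq 128$, $C=1.26$ with $M\geq 10^3$, $C=2.38$ with $M\geq 512$ --- and your sketch defers precisely the computations (sharp bounds on $K_M$, $K_M'$, $K_M''$ and their tails, summed over separated points) from which those constants emerge; nothing in the outline certifies that the near/far estimates close at these particular values of $C$ and $M$, so as written this is a roadmap of the known proof rather than a proof. Second, a smaller gap in your uniqueness step: once equality forces $\supp(\nu)\subset\{x_1,\dots,x_K\}$, the claim that ``matching the Fourier data on $\Lambda$ pins down $\nu=\mu$'' is a genuine Vandermonde full-rank fact only when $d=1$ (distinct nodes, $K\leq 2M+1$). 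For $d=2$ full column rank of the matrix $\big(e^{-2\pi i m\cdot x_k}\big)_{m\in\Lambda,\,k}$ does not follow from distinctness alone; the cited papers close this by constructing, for each $j$, an additional certificate interpolating $1$ at $x_j$ and $0$ at the other $x_k$, and pairing it against $\nu-\mu$. You would need that extra construction (which your interpolation scheme does deliver, but you must say so) to finish part (c).
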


The theorem proves that, in theory, it is possible to uniquely recover discrete measures satisfying a sufficiently strong minimum separation condition. To compute a numerical approximation of such a measure, they proposed a convex-optimization based algorithm which relies on \cite[Theorem 4.24]{dumitrescu2007positive}. They stated their algorithm for $d=1$, but it is possible an analogous algorithm for higher dimensions. 

\begin{algorithm} [Cand\'{e}s and Fernandez-Granda, Section 4, \cite{candes2014towards}] \label{alg CFG}
	Suppose $\mu\in M(\T)$ and $\Lambda=\{-M,-M+1,\dots,M\}\subset\Z$ satisfy the hypotheses in any of the three situations described in Theorem \ref{thm CFG}.
	\begin{enumerate}
		\item 
		Solve the convex optimization problem described in \cite[Corollary 4.1]{candes2014towards} to obtain a function $\varphi(x)=\sum_{m=-M}^M c_me^{2\pi im x}$ such that $\|\varphi\|_\infty \leq 1$ and $\<\varphi,\mu\>=\|\mu\|$. Then, $\mu$ is supported in the set,
		\[
		S=\{x\in\T\colon |\varphi(x)|=1\}.
		\] 
		
		\item
		Assume that $S\not=\T$. Then, $S$ is a discrete set that consists of at most $2M$ points, and it is possible to determine $\mu$ by solving a system of over-determined equations.
	\end{enumerate}	
\end{algorithm}

Cand\`{e}s and Fernandez-Granda pointed out that their algorithm for recovering $\mu$ is incomplete because it fails precisely when $\{x\in\T\colon |\varphi(x)|=1\}=\T$, i.e., when $|\varphi|\equiv 1$, since in this case, $\varphi$ does not provide information about the support of $\mu$. We emphasize that this scenario can occur even if $\mu\in M(\T)$ and $\Lambda\subset\Z$ satisfy the hypotheses of Theorem \ref{thm CFG}. 

\subsection{Our approach}
\label{section approach}

This paper connects the Cand\'{e}s and Fernandez-Granda theory on super-resolution \cite{candes2014towards} with Beurling's work on minimal extrapolation \cite{beurling1989balayage, beurling1989interpolation}. This connection is exploited to obtain new theoretical and computational results on super-resolution.

We first introduce some additional notation, since referring to Problem (\ref{SR}) can be ambiguous when working with several different measures. We adopt Beurling's language, and we say that $\nu\in M(\T^d)$ is a \emph{minimal extrapolation} of $\mu\in M(\T^d)$ from $\Lambda\subset\Z^d$ if it is a solution to Problem (\ref{SR}). If $\nu\in M(\T^d)$ and $\hat\mu=\hat\nu$ on $\Lambda$, then we say $\nu$ is an \emph{extrapolation} of $\mu$ from $\Lambda$. When there is no ambiguity, we shall say that ``$\nu$ is an extrapolation (respectively, a minimal extrapolation)" as a shortened version of ``$\nu$ is an extrapolation (respectively, a minimal extrapolation) of $\mu$ from $\Lambda$".

Let $\E=\E(\mu,\Lambda)$ be the set of all minimal extrapolations. For any $\nu\in\E$, let $\epsilon=\epsilon(\mu,\Lambda)=\|\nu\|$, which is the optimal value attained by Problem (\ref{SR}). It is possible to compute a numerical approximation of $\epsilon$ by solving the convex optimization problem described in \cite[Coroallary 4.1]{candes2014towards}, but its exact value is unknown. Finally, we define the set,
\begin{equation}
\label{Gamma}
\Gamma
=\Gamma(\mu,\Lambda)
=\{m\in\Lambda\colon |\hat\mu(m)|=\epsilon(\mu,\Lambda)\}.
\end{equation}
Although $\Gamma$ is defined in terms of the unknown quantity $\epsilon$, we shall see that, for many important applications, it is possible to deduce $\epsilon$ and $\Gamma$. Our theorem shows that the minimal extrapolations intimately depend on $\Gamma$. 

\begin{theorem}
	\label{thm BL}
	Let $\mu\in M(\T^d)$ and let $\Lambda\subset\Z^d$ be a finite subset. 
	\begin{enumerate}[(a)]
		\item
		Suppose $\#\Gamma=0$. Then, there exists a closed set $S$ of $d$-dimensional Lebesgue measure zero such that each minimal extrapolation is a singular measure supported in $S$. In particular, if $d=1$, then $S$ is a finite number of points and each minimal extrapolation is a discrete measure supported in $S$.
		\item
		Suppose $\#\Gamma\geq 2$. For each distinct pair $m,n\in\Gamma$, define $\alpha_{m,n}\in\R/\Z$ by $e^{2\pi i\alpha_{m,n}}=\hat\mu(m)/\hat\mu(n)$ and the closed set,
		\begin{equation}
		\label{S}
		S=\bigcap_{\substack{m,n\in\Gamma \\ m\not=n}} \{x\in\T^d \colon x\cdot (m-n)+\alpha_{m,n}\in\Z\},
		\end{equation}
		which is an intersection of $\#\Gamma\choose 2$ periodic hyperplanes. Then, each minimal extrapolation is a singular measure supported in $S$. In particular, if $d=1$, then $S$ is a finite number of points and each minimal extrapolation is a discrete measure supported in $S$.
		
		If $d\geq 2$ and there exist $d$ linearly independent vectors, $p_1,p_2,\dots,p_d\in\Z^d$, such that
		\[
		\{p_1,p_2,\dots,p_d\}\subset\{m-n\colon m,n\in\Gamma\},
		\]
		then $S$ is a lattice on $\T^d$.  
	\end{enumerate}
\end{theorem}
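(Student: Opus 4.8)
The plan is to drive both cases by a single rigidity phenomenon — that a measure whose Fourier coefficient saturates $|\hat\nu(m)|=\|\nu\|$ has a rigidly determined phase — and to supply this saturation from two different sources. First I would record that $\E\neq\emptyset$: the feasible set is weak-$*$ closed in $M(\T^d)=C(\T^d)^*$, the norm is weak-$*$ lower semicontinuous, and a minimizing sequence is norm-bounded, so weak-$*$ compactness of balls yields a minimizer and makes $\epsilon$ and $\Gamma$ well defined. Since every extrapolation obeys $|\hat\mu(m)|=|\hat\nu(m)|\leq\|\nu\|=\epsilon$ on $\Lambda$, the set $\Gamma$ is exactly where this bound is an equality. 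The central lemma is then the following: for $\nu\in\E$ with polar decomposition $d\nu=h\,d|\nu|$, $|h|=1$ $|\nu|$-a.e., equality in
\[
|\hat\nu(m)| = \Big|\int_{\T^d} e^{-2\pi i m\cdot x}\,h(x)\,d|\nu|(x)\Big| \leq \int_{\T^d} d|\nu| = \|\nu\|
\]
forces $h(x)=\omega_m\,e^{2\pi i m\cdot x}$ for $|\nu|$-a.e. $x$, with the unimodular constant $\omega_m=\hat\mu(m)/\epsilon$.

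For part (b) I would apply this lemma at every $m\in\Gamma$. For distinct $m,n\in\Gamma$, equating the two formulas for $h$ gives $e^{2\pi i(m-n)\cdot x}=\omega_n/\omega_m=e^{-2\pi i\alpha_{m,n}}$ $|\nu|$-a.e., that is $(m-n)\cdot x+\alpha_{m,n}\in\Z$; passing to closures and intersecting over all pairs gives $\supp\nu\subseteq S$. Each $m-n\neq0$, so every periodic-hyperplane condition in (\ref{S}) is $d$-dimensionally null, whence $S$ is null and every $\nu\in\E$ is singular. For $d=1$ the single congruence $(m-n)x+\alpha_{m,n}\in\Z$ has exactly $|m-n|$ roots in $\T$, so $S$ is finite and $\nu$ is discrete. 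For $d\geq2$ with $d$ linearly independent differences $p_1,\dots,p_d$, stacking them into an invertible integer matrix $P$ identifies $S$ with the finite set $P^{-1}(\Z^d-\alpha)\bmod\Z^d$ of cardinality $|\det P|$, a lattice (coset).

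For part (a) the frequencies supply no rigidity, so I would turn to duality. Strong duality in the $C(\T^d)$–$M(\T^d)$ pairing yields a dual certificate $\varphi$: a trigonometric polynomial with spectrum in $\Lambda$, $\|\varphi\|_\infty\leq1$, and $\<\varphi,\nu\>=\|\nu\|$ for all $\nu\in\E$, with complementary slackness forcing $\supp\nu\subseteq S:=\{x\in\T^d:|\varphi(x)|=1\}$. To see that $S$ is null when $\#\Gamma=0$, I would prove the elementary lemma that a constant-modulus trigonometric polynomial, $|\varphi|\equiv1$, must be a single unimodular monomial $\varphi=c\,e^{2\pi i m_0\cdot x}$: choosing a generic linear functional on $\R^d$ and inspecting the autocorrelation coefficient of $|\varphi|^2$ at the difference of the extreme spectral points shows it is nonzero unless the spectrum is a single point. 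But then $\|\nu\|=|\<\varphi,\nu\>|=|\hat\mu(m_0)|<\epsilon$, since $m_0\in\Lambda$ and $\#\Gamma=0$ — a contradiction. Hence $|\varphi|\not\equiv1$, so $1-|\varphi|^2$ is a nonnegative, not-identically-zero real-analytic trigonometric polynomial; its zero set $S$ is therefore Lebesgue-null, making each $\nu\in\E$ singular, and for $d=1$ it has only finitely many zeros, making $\nu$ discrete.

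The step I expect to be the main obstacle is the duality in part (a): producing a certificate whose spectrum is confined to $\Lambda$ and deducing the complementary-slackness inclusion $\supp\nu\subseteq\{|\varphi|=1\}$ for \emph{every} minimal extrapolation simultaneously. This rests on the attainment of the dual optimum together with a careful Hahn–Banach/Fenchel argument, and it is precisely the point at which Beurling's minimal-extrapolation machinery enters. By contrast, the phase-rigidity lemma, the constant-modulus-to-monomial lemma, and the measure-theoretic conclusions are routine once this duality is secured.
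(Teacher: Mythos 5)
Your proposal is correct and takes essentially the same route as the paper: part (b) via the phase-rigidity argument showing $\sign(\nu)(x)=\omega_m e^{2\pi i m\cdot x}$ $\nu$-a.e.\ for each $m\in\Gamma$ (the paper's Proposition \ref{prop2}b) and then intersecting the resulting periodic hyperplanes and stacking $d$ independent differences into an invertible integer matrix, and part (a) via a dual certificate $\varphi\in U$ with $\<\varphi,\mu\>=\epsilon$ whose unit-modulus set supports every minimal extrapolation (the paper's Proposition \ref{prop1}f,g), ruling out $|\varphi|\equiv 1$ because a unimodular trigonometric polynomial must be a single monomial, which would force $\Gamma\not=\emptyset$ (Proposition \ref{prop2}a), and concluding via the null zero set of the nontrivial real-analytic function $1-|\varphi|^2$. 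The duality step you flag as the main obstacle is exactly what the paper's Proposition \ref{prop1} supplies, and your autocorrelation sketch of the monomial lemma fills in a fact the paper merely asserts.
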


The following topics encapsulate the contributions of this paper. 

\begin{enumerate}[(a)]
	
	\item
	\emph{Qualitative behavior of minimal extrapolations}. It is interesting to note that $\Gamma$ provides 
significant information about the minimal extrapolations.  Theorem \ref{thm BL} shows that, regardless of the dimension, when $\#\Gamma=0$ or $\#\Gamma\geq 2$, the minimal extrapolations are always singular measures. However, when $\#\Gamma=1$, the minimal extrapolations can be extremely different from each other. Proposition \ref{prop5} shows that the case $\#\Gamma=1$ is connected with the existence of positive absolutely continuous minimal extrapolations. In Example \ref{e2}, we provide an example for which there are uncountably many discrete minimal extrapolations as well as uncountably many positive absolutely continuous minimal extrapolations.   
	
	\item
	\emph{Computational consequences}. One important aspect of Theorem \ref{thm BL} is its relationship with Algorithm \ref{alg CFG}. Proposition \ref{prop4} shows that, if the algorithm fails, then we necessarily have $\epsilon=\|\hat\mu\|_{\ell^\infty(\Lambda)}$ and $\Gamma\not=\emptyset$. Since we are given the values of $\hat\mu$ on $\Lambda$, this immediately tells us what $\epsilon$ and $\Gamma$ are. If $\#\Gamma=1$, then we cannot deduce anything about the minimal extrapolations. As previously discussed, this is inherent with the super-problem and is not an artifact of our analysis. If $\#\Gamma\geq 2$, then the theorem is applicable and the minimal extrapolations are singular measures supported in the set defined in (\ref{S}), which can be explicitly computed. We show how to apply our theorem to compute pertinent analytical examples in Section \ref{examples}. Hence, our theorem is applicable even when Algorithm \ref{alg CFG} fails. 
	
	\item
	\emph{Super-resolution of singular measures}. When $d\geq 2$, Theorem \ref{thm BL} suggests that some singular continuous measures could be solutions to the super-resolution problem. Somewhat surprisingly, this is indeed the case because Example \ref{e5} provides an example of a singular continuous minimal extrapolation. This also demonstrates that the conclusion of Theorem \ref{thm BL}b is optimal. Determining what types of singular continuous measures can be uniquely recovered by Problem (\ref{SR}) is an interesting mathematical problem, and we believe such a result would be useful in applications, e.g., \cite{greenspan2009super}. We shall carefully address the super-resolution of singular continuous measures in the sequel \cite{benli2}. 
	
	\item
	\emph{Impossibility of super-resolution}. Theorem \ref{thm BL} does not require additional assumptions on $\mu\in M(\T^d)$ or on the finite subset $\Lambda\subset\Z^d$. Since the theorem also describes the support set of the minimal extrapolations of $\mu$ from $\Lambda$, it is useful for determining whether it is possible for a given $\mu$ to be a minimal extrapolation. To illustrate this point, in Example \ref{e6}, we provide a simple proof that shows, in general, a minimal separation condition is necessary to super-resolve a discrete measure. 

	\item
	\emph{Discrete-continuous correspondence}. The super-resolution problem can be viewed as a continuous analogue of the discrete Fourier compressed sensing problem that was studied in \cite{candes2006robust}. Let $\F_N\in\C^{N\times N}$ be the DFT matrix, $x\in\C^N$ be an unknown vector, and $\Lambda\subset\{0,1,\dots,N-1\}$. The goal is to recover $x$ from $\F_Nx\mid_\Lambda$ by solving
	\begin{equation}
	\label{CS}
	\tag{CS}
	\argmin_{y\in\C^N}\|y\|_{\ell^1} 
	\quad\text{subject to}\quad
	\F_N x=\F_N y 
	\quad\text{on }	\Lambda.
	\end{equation} 
	By considering the case that $\mu=\sum_{n=1}^N a_n\delta_{n/N}$ and $x(n)=a_n$, it is straightforward to see that Problem (\ref{SR}) is a generalization of Problem (\ref{CS}), see \cite{candes2014towards} for further details. From this point of view, Theorem \ref{thm BL} is a discrete-continuous correspondence result. Indeed, if $\#\Gamma\geq 2$, and either $d=1$ or the vectors $\{p_j\}$ exist, then the minimal extrapolations are necessarily discrete measures whose support lie on a lattice; as we just discussed, such measures can be identified with vectors that are solutions to the discrete problem.   
	
	\item
	\emph{Mathematical connections}. Our results are closely related to Beurling's work on minimal extrapolation. He dealt with $\R^1$ instead of $\T^d$, so Theorem \ref{thm BL} is an adaptation to the torus and a generalization to higher dimensions of \cite[Theorem 2, page 362]{beurling1989interpolation}. We remark that there are non-trivial technical differences between working with $\R^1$ and $\T^d$. There are also other classical papers related to minimal extrapolation, e.g., \cite{esseen1945fourier,esseen1954note,hewitt1955rs}. 
	
	Propositions \ref{prop3} connects our results to the Cand\`{e}s and Fernandez-Granda theory \cite{candes2014towards} by introducing a concept called an admissibility range for $\epsilon$. This connection is exploited to prove Proposition \ref{prop4}, which in turn, shows that our theorem is applicable to situations where Algorithm \ref{alg CFG} fails.
	
	Proposition \ref{prop1} contains basic duality results for Problem (\ref{SR}) that can be derived from abstract convex analysis, e.g., see \cite{duval2015exact}. None of our proofs require knowledge of convex analysis, so we believe our paper is accessible to non-specialists. 
	
\end{enumerate}

\subsection{Outline}
Section \ref{section theory} contains the basis of our mathematical theory. Section \ref{section functional} proves well-known results on the super-resolution problem using only basic functional analysis. Section \ref{section Gamma} provides a characterization of $\Gamma$ and illustrates its importance in our analysis. The proof of Theorem \ref{thm BL} is given in Section \ref{section Beurling}. Section \ref{section admissibility} contains the results that fully connect the Beurling and Cand\`{e}s and Fernandez-Granda theories, as well as the the relationship between Theorem \ref{thm BL} and Algorithm \ref{alg CFG}. Section \ref{section uniqueness} includes a basic uniqueness result, and the mentioned non-uniqueness results for the case $\#\Gamma=1$. Section \ref{section basic} examines whether it is possible to relate the minimal extrapolations of $\mu$ with those of $T\mu$ for various operators $T$. Section \ref{examples} contains all of the examples that we have previously mentioned, including the minimal extrapolations of discrete measures and of singular continuous measures. 

\section{Mathematical Theory}
\label{section theory}

\subsection{Preliminary results}
\label{section functional}

Let $C(\T^d)$ be the space of complex-valued continuous functions on $\T^d$ equipped with the sup-norm $\|\cdot\|_\infty$. Then, $C(\T^d)$ is a Banach space, and let $C(\T^d)'$ be its dual space of continuous linear functionals with the usual norm $\|\cdot\|$. The celebrated Riesz representation theorem, e.g., see \cite[Theorem 7.2.7, page 334]{benedetto2010integration} states that:
\begin{enumerate}[(a)]
	\item 
	For each $\mu\in M(\T^d)$, there exists a bounded linear functional $\ell_\mu\in C(\T^d)'$ such that $\|\mu\|=\|\ell_\mu\|$ and 
	\[
	\forall f\in C(\T^d), \quad
	\ell_\mu(f)
	=\<f,\mu\>
	=\int_{\T^d} f(x)\ \overline{d\mu(x)}.
	\]
	\item
	For each bounded linear functional $\ell\in C(\T^d)'$, there exists a unique $\mu\in M(\T^d)$ such that $\|\mu\|=\|\ell\|$ and
	\[
	\forall f\in C(\T^d), \quad
	\ell(f)
	=\<f,\mu\>
	=\int_{\T^d} f(x)\ \overline{d\mu(x)}.
	\]
\end{enumerate}

Proposition \ref{prop1}a shows that the super-resolution problem is well-posed, by using standard functional analysis arguments. However, this type of argument does not yield useful statements about the minimal extrapolations. Instead of working with $C(\T^d)$, we shall work with a subspace. Since only $\hat\mu\mid_\Lambda$ is important to the super-resolution problem, we consider the subspace  
\[
C(\T^d;\Lambda)
=\Big\{f\in C(\T^d)\colon f(x)=\sum_{m\in\Lambda} a_me^{2\pi im\cdot x},\ a_m\in\C\Big\}.
\]
Proposition \ref{prop1}b shows that $C(\T^d;\Lambda)$ is a closed subspace of $C(\T^d)$, and that implies $C(\T^d;\Lambda)$ is a Banach space. Further, Proposition \ref{prop1}c demonstrates that
\[
U=U(\T^d;\Lambda)=\{f\in C(\T^d;\Lambda)\colon \|f\|_\infty\leq 1\},
\]
the closed unit ball of $C(\T^d;\Lambda)$, is compact. 

The purpose of restricting to the subspace, $C(\T^d;\Lambda)$, is to identify $\mu\in M(\T^d)$ with the bounded linear functional, $L_\mu\in C(\T^d;\Lambda)'$, defined as
\[
\forall f\in C(\T^d;\Lambda),\quad
L_\mu(f)
=\int_{\T^d} f(x)\ \overline{d\mu(x)}.
\]
Although, by definition, $\|L_\mu\|=\sup_{f\in U}|L_\mu(f)|$, Proposition \ref{prop1}d,e show that we have the stronger statement,
\[
\|L_\mu\|=\max_{f\in U}|L_\mu(f)|=\epsilon. 
\]

The purpose of studying $L_\mu$ is to deduce information about the minimal extrapolations. As a consequence of the Radon-Nikodym theorem, for each $\mu\in M(\T^d)$, there exists a $\mu$-measurable function $\sign(\mu)$ such that $|\sign(\mu)|=1$ $\mu$-a.e. and satisfies the identity
\[
\forall f\in L^1_{|\mu|}(\T^d),\quad 
\int_{\T^d} f\ d|\mu|=\int_{\T^d} f\ \overline{\sign(\mu)}\ d\mu.
\]
See \cite[Theorem 5.3.2, page 242, and Theorem 5.3.5, page 244]{benedetto2010integration} for further details. The support of $\mu\in M(\T^d)$, denoted $\supp(\mu)$, is the complement of all open sets $A\subset\T^d$ such that $\mu(A)=0$. Proposition \ref{prop1}g shows that it is possible to interpolate the sign of the minimal extrapolations by a function belonging to $U$.

\begin{proposition}
	\label{prop1}
	Let $\mu\in M(\T^d)$ and let $\Lambda\subset\Z^d$ be a finite subset.
	\begin{enumerate}[(a)]
	\item
	$\E\subset M(\T^d)$ is non-empty, weak-$\ast$ compact, and convex.
	\item 
	$C(\T^d;\Lambda)$ is a closed subspace of $C(\T^d)$.
	\item
	$U$ is a compact subset of $C(\T^d;\Lambda)$.
	\item
	$\epsilon=\|L_\mu\|$.
	\item
	$\epsilon=\max_{f\in U}|\<f,\mu\>|$.
	\item
	There exists $\varphi\in U$ such that $\<\varphi,\mu\>=\epsilon$.
	\item
	If $\varphi\in U$ and $\<\varphi,\mu\>=\epsilon$, then
	\[
	\forall\nu\in\E,\quad 
	\varphi=\sign(\nu) \quad
	\nu \text{-a.e.,}\quad \text{and}\quad 
	\supp(\nu)\subset\{x\in\T^d\colon |\varphi(x)|=1\}.
	\]
	\end{enumerate} 
\end{proposition}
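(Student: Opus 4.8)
The plan is to treat parts (a)--(c) as soft functional-analytic facts, to extract (d)--(f) from duality combined with the finite-dimensional compactness, and to reserve the real work for the sign/support statement in (g).

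For (a), I would observe that the feasible set $\{\nu\in M(\T^d)\colon \hat\nu=\hat\mu \text{ on }\Lambda\}$ is affine, namely a translate of the weak-$\ast$ closed subspace cut out by the finitely many weak-$\ast$ continuous functionals $\nu\mapsto\hat\nu(m)$, $m\in\Lambda$. Since $\mu$ is itself feasible, $\epsilon\leq\|\mu\|<\infty$, so I may intersect the feasible set with the ball $\{\|\nu\|\leq\|\mu\|\}$, which is weak-$\ast$ compact by Banach--Alaoglu and the identification $M(\T^d)=C(\T^d)'$. The total variation norm, being a dual norm, is weak-$\ast$ lower semicontinuous, so it attains its minimum on this weak-$\ast$ compact feasible set, giving non-emptiness of $\E$ and showing $\epsilon$ is attained. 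Finally $\E$ equals the intersection of the affine feasible set with the weak-$\ast$ compact convex sublevel set $\{\|\nu\|\leq\epsilon\}$, hence is weak-$\ast$ compact and convex. Parts (b) and (c) are immediate once one notes that $C(\T^d;\Lambda)$ is the span of the finitely many characters $\{e^{2\pi im\cdot x}\colon m\in\Lambda\}$, hence finite-dimensional: finite-dimensional subspaces are closed, and in finite dimensions the closed unit ball $U$ is compact by Heine--Borel.

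For (d), I would extend $L_\mu\in C(\T^d;\Lambda)'$ to all of $C(\T^d)$ without increasing its norm (Hahn--Banach) and represent the extension by a measure $\nu$ with $\|\nu\|=\|L_\mu\|$ via the Riesz representation theorem quoted above. The one step requiring care, because of the conjugation in the pairing $\<f,\mu\>=\int f\,\overline{d\mu}$, is to check that $L_\nu=L_\mu$ on $C(\T^d;\Lambda)$ is equivalent to the feasibility constraint $\hat\nu=\hat\mu$ on $\Lambda$; testing against the characters $f(x)=e^{2\pi im\cdot x}$ and conjugating gives $\overline{\hat\nu(m)}=\overline{\hat\mu(m)}$. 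Thus the extending measure is feasible, so $\epsilon\leq\|\nu\|=\|L_\mu\|$, while conversely any feasible $\nu$ restricts to $L_\mu$ and satisfies $\|L_\mu\|\leq\|\nu\|$, giving $\|L_\mu\|\leq\epsilon$. Part (e) then follows since $\max_{f\in U}|\<f,\mu\>|=\|L_\mu\|=\epsilon$, the supremum being attained because $U$ is compact by (c) and $f\mapsto|\<f,\mu\>|$ is continuous; part (f) is obtained by taking a maximizer and rotating it by a unimodular constant, which keeps it in $U$, so that $\<\varphi,\mu\>$ becomes the nonnegative real number $\epsilon$.

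The substance is in (g), and this is the only place I expect genuine difficulty. Given such a $\varphi$ and any $\nu\in\E$, I would first use $\varphi\in C(\T^d;\Lambda)$ together with (d) to get $\<\varphi,\nu\>=\<\varphi,\mu\>=\epsilon=\|\nu\|$. Writing $d\nu=\sign(\nu)\,d|\nu|$ via the Radon--Nikodym identity, this reads
\[
\int_{\T^d}\varphi\,\overline{\sign(\nu)}\,d|\nu|=\epsilon=\int_{\T^d} 1\,d|\nu|,
\]
so that $\int_{\T^d}\big(1-\varphi\,\overline{\sign(\nu)}\big)\,d|\nu|=0$. Since $\|\varphi\|_\infty\leq 1$ and $|\sign(\nu)|=1$, the integrand has real part $\geq 1-|\varphi|\geq 0$; as the integral vanishes and $|\nu|\geq 0$, the nonnegative function $\Re\big(1-\varphi\,\overline{\sign(\nu)}\big)$ must be $0$ $|\nu|$-a.e. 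This forces $|\varphi|=1$ and then $\varphi\,\overline{\sign(\nu)}=1$, i.e. $\varphi=\sign(\nu)$, $\nu$-a.e. For the support inclusion, the open set $\{x\colon|\varphi(x)|<1\}$ is $|\nu|$-null by what we just proved, hence disjoint from $\supp(\nu)$, yielding $\supp(\nu)\subset\{x\in\T^d\colon|\varphi(x)|=1\}$. The main obstacle throughout is care rather than depth: keeping the conjugations in $\<\cdot,\cdot\>$ straight in (d), and in (g) squeezing the almost-everywhere equalities out of a single vanishing integral by a real-part positivity argument.
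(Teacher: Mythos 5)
Your proof is correct, and for the duality core (d)--(g) it follows essentially the paper's route: Hahn--Banach plus Riesz representation for (d), compactness of $U$ for (e), a phase rotation for (f), and the equality case of $\<\varphi,\nu\>=\|\nu\|$ for (g). For (a)--(c), however, you take a genuinely different and in places cleaner path. For (a), the paper runs a sequential minimizing-sequence argument through Banach--Alaoglu, re-verifies feasibility of the weak-$\ast$ limit, and then repeats the extraction argument to get compactness of $\E$; you instead note that the feasible set is weak-$\ast$ closed (it is cut out by finitely many weak-$\ast$ continuous functionals $\nu\mapsto\hat\nu(m)$), intersect it with a Banach--Alaoglu ball, and invoke weak-$\ast$ lower semicontinuity of the dual norm, after which $\E$ is exhibited directly as the intersection of a weak-$\ast$ closed affine set with a compact convex sublevel set --- packaging existence, compactness, and convexity in one stroke. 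For (b)--(c) the difference is sharper: the paper proves closedness via convergence of Fourier coefficients and compactness of $U$ via a uniform-boundedness/equicontinuity argument and Arzel\`{a}--Ascoli, whereas you observe that $C(\T^d;\Lambda)$ is the span of the finitely many characters $\{e^{2\pi im\cdot x}\colon m\in\Lambda\}$, hence finite-dimensional, so it is automatically closed and its closed unit ball is compact by Heine--Borel; this renders the equicontinuity argument unnecessary. Two further points in your favor: in (d), testing against $f(x)=e^{2\pi im\cdot x}$ and then conjugating is exactly right, and is more careful than the paper's choice $f(x)=e^{-2\pi im\cdot x}$, which lies in $C(\T^d;\Lambda)$ only when $-m\in\Lambda$; and in (g), your real-part argument from $\int_{\T^d}\big(1-\varphi\,\overline{\sign(\nu)}\big)\,d|\nu|=0$ supplies the equality-case detail that the paper asserts with only ``we must have $\varphi=\sign(\nu)$ $\nu$-a.e.''
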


\begin{proof}
	\indent
	\begin{enumerate}[(a)]
	\item
	By definition of $\epsilon$, there exists a sequence $\{\nu_j\}\subset\{\nu\colon \hat{\nu}=\hat{\mu}\text{ on }\Lambda\}$ such that $\|\nu_j\|\to\epsilon$. Then, this sequence is bounded. By Banach-Alaoglu, after passing to a subsequence, we can assume there exists $\nu\in M(\T^d)$ such that $\nu_j\to\nu$ in the weak-$\ast$ topology. 
	
	Let $V$ be the closed unit ball of $C(\T^d)$. We have $\|\nu\|\leq\epsilon$ because
	\[
	\|\nu\|
	=\sup_{f\in V} |\<f,\nu\>|
	=\sup_{f\in V} \lim_{j\to\infty} |\<f,\nu_j\>|
	\leq \sup_{f\in V} \lim_{j\to\infty} \|f\|_\infty \|\nu_j\|
	\leq \epsilon.
	\]
	Moreover, for $m\in\Lambda$, we have
	\[
	\hat{\mu}(m)
	=\lim_{j\to\infty}\hat{\nu_j}(m)
	=\lim_{j\to\infty}\int_{\T^d} e^{-2\pi im\cdot x}\ d\nu_j(x)
	=\int_{\T^d} e^{-2\pi im\cdot x}\ d\nu(x)
	=\hat{\nu}(m).
	\]
	This shows that $\nu$ is an extrapolation, and thus, $\|\nu\|\geq\epsilon$. Therefore, $\nu\in\E$.
	
	The proof that $\E$ is weak-$\ast$ compact is similar. Pick any sequence $\{\nu_j\}\subset\E$ and after passing to a subsequence, we can assume $\nu_j\to \nu$ in the weak-$\ast$ topology for some $\nu\in M(\T^d)$. By the same argument, we see that $\nu\in\E$.
	
	
	If $\E$ contains exactly one measure, then $\E$ is trivially convex. Otherwise, let $t\in[0,1]$, $\nu_0,\nu_1\in\E$, and $\nu_t=(1-t)\nu_0+t\nu_1$. Then, $\nu_t$ is an extrapolation and thus, $\|\nu_t\|\geq \epsilon$. By the triangle inequality, we have $\|\nu_t\|\leq(1-t)\|\nu_0\|+t\|\nu_1\|=\epsilon$. Thus, $\nu_t\in\E$ for each $t\in[0,1]$. 
	
	\item 
	Suppose $\{f_j\}\subset C(\T^d)$ and that there exists $f\in C(\T^d)$ such that $f_j\to f$ uniformly. Then, $\hat{f_j}(m)\to\hat f(m)$ for all $m\in \Z^d$. Since $\hat{f_j}(m)=0$ if $m\not\in\Lambda$, we deduce that $\hat f(m)=0$ if $m\not\in\Lambda$. This shows that $f\in C(\T^d;\Lambda)$ and thus, $C(\T^d;\Lambda)$ is a closed subspace of $C(\T^d)$.  
	
	\item
	Let $\{f_j\}\subset U$. We first show that $\{f_j\}$ is a uniformly bounded equicontinuous family. By definition, $\|f_j\|_\infty\leq 1$, and there exist $a_{j,m}\in\C$ such that $f_j(x)=\sum_{m\in\Lambda}a_{j,m}e^{2\pi im\cdot x}$. Note that $|a_{j,m}|\leq\|f_j\|_{\infty}\leq 1$. Then, for any $x,y\in\T^d$, we have
	\[
	|f_j(x)-f_j(y)|
	=\Big|\sum_{m\in\Lambda} a_{j,m} (e^{2\pi im\cdot x}-e^{2\pi im\cdot y})\Big|
	\leq \sum_{m\in\Lambda} |e^{2\pi im\cdot x}-e^{2\pi im\cdot y}|. 
	\]
	Let $\epsilon>0$ and $m\in\Lambda$. There exists $\delta_m>0$ such that $|e^{2\pi im\cdot x}-e^{2\pi im\cdot y}|<\epsilon$ whenever $|x-y|<\delta_m$. Let $\delta=\min_{m\in\Lambda}\delta_m$. Combining this with the previous inequalities, we have 
	\[
	|f_j(x)-f_j(y)|
	\leq  \sum_{m\in\Lambda} |e^{2\pi im\cdot x}-e^{2\pi im\cdot y}|
	\leq \#\Lambda \epsilon,
	\]
	whenever $|x-y|<\delta$. This shows that $\{f_j\}$ is a uniformly bounded equicontinuous family. 
	
	By the Arzel\`{a}-Ascoli theorem, there exists $f\in C(\T^d)$, with $\|f\|_\infty\leq 1$, and a subsequence $\{f_{j_k}\}$ such that $f_{j_k}\to f$ uniformly. Thus, $\hat{f_{j_k}}(m)\to \hat f(m)$ for all $m\in\Z^d$, which shows that $f\in C(\T^d;\Lambda)$. 
	
	\item
	Let $\nu\in\E$. Then,
	\[
	\forall f\in U,\quad 
	|L_\mu(f)|
	=|\<f,\mu\>|
	=|\<f,\nu\>|
	\leq \|f\|_\infty\|\nu\|
	\leq\epsilon,
	\]
	which proves the upper bound, $\|L_\mu\|\leq\epsilon$.
	
	For the lower bound, we use the Hahn-Banach theorem to extend $L_\mu\in C(\T^d;\Lambda)'$ to $\ell\in C(\T^d)'$, where $\|L_\mu\|=\|\ell\|$. By the Riesz representation theorem, there exists a unique $\sigma\in M(\T^d)$ such $\ell(f)=\<f,\sigma\>$ for all $f\in C(\T^d)$ and $\|\sigma\|=\|\ell\|$. In particular, 
	\[
	\forall f\in C(\T^d;\Lambda), \quad
	\<f,\sigma\>=\ell(f)=L_\mu(f)=\<f,\mu\>.
	\]
	Set $f(x)=e^{-2\pi im\cdot x}$, where $m\in\Lambda$, to deduce that $\hat\mu=\hat\sigma$ on $\Lambda$. This implies $\|\sigma\|\geq\epsilon$. Combining these facts, we have
	\[
	\epsilon
	\leq\|\sigma\|
	=\|\ell\|
	=\|L_\mu\|.
	\]
	This proves the lower bound. 
	
	\item
	We know that $\|L_\mu\|=\epsilon$. By definition, there exists $\{f_j\}\subset U$ such that $|L_\mu(f_j)|\geq\epsilon-1/j$. By compactness of $U$, there exists a subsequence $\{f_{j_k}\}\subset U$ and $f\in U$ such that $f_{j_k}\to f$ uniformly. We immediately have $|L_\mu(f)|\leq \|f\|_\infty\epsilon\leq \epsilon$. For the reverse inequality, as $k\to\infty$,
	\[
	|L_\mu(f)|
	\geq |L_\mu(f_{j_k})|-|L_\mu(f-f_{j_k})|
	\geq \epsilon-\frac{1}{j_k}-\|\mu\|\|f-f_{j_k}\|_\infty
	\to \epsilon. 
	\]
	This proves that $|L_\mu(f)|=\epsilon$. 
	
	\item
	There exists $f\in U$ such that $|L_\mu(f)|=\epsilon$. This implies $e^{i\theta}L_\mu(f)=\epsilon$ for some $\theta\in\R$. Hence, let $\varphi=e^{i\theta}f\in U$, and thus, $L_\mu(\varphi)=\epsilon$.
	
	\item
	Let $\nu\in\E$. Since $\varphi\in U\subset C(\T^d;\Lambda)$, we have
	\[
	\<\varphi,\nu\>
	=\<\varphi,\mu\>
	=L_\mu(\varphi)
	=\epsilon
	=\|\nu\|.
	\]
	Since $\|\varphi\|_\infty\leq 1$ and $\<\varphi,\nu\>=\|\nu\|$, we must have $\varphi=\sign(\nu)$ $\nu$-a.e. 
	
	Since $|\varphi|=|\sign(\nu)|=1$ $\nu$-a.e. and $\nu$ is a Radon measure, 
	\[
	\supp(\nu)
	\subset \overline{\{x\in\T^d\colon |\varphi(x)|=1\}}
	=\{x\in\T^d\colon |\varphi(x)|=1\}.
	\]
	The last equality holds because the inverse image of the closed set $\{1\}$ under the continuous function $|\varphi|$ is closed.
	\end{enumerate}

\end{proof}

\subsection{A characterization of $\Gamma$}
\label{section Gamma}

Proposition \ref{prop1}f,g show that there exists $\varphi\in U$ such that the minimal extrapolations are supported in the closed set $S=\{x\in\T^d\colon |\varphi(x)|=1\}$. By itself, this statement is not useful because if $|\varphi|\equiv 1$, then $S=\T^d$. However, Proposition \ref{prop2} characterizes the case that $|\varphi|\equiv 1$ in terms of the set $\Gamma$. Intuitively, $\#\Gamma$ represents the number of ``bad" functions in $U$ that interpolate the signs of the minimal extrapolations. While it is desirable to have $\Gamma=\emptyset$, perhaps surprisingly, we can make strong statements when $\#\Gamma$ is large.

\begin{proposition}
	\label{prop2}
	Let $\mu\in M(\T^d)$ and $\Lambda\subset\Z^d$ be a finite subset. 
	\begin{enumerate}[(a)]
		\item
		Suppose $\varphi\in U$, $\<\varphi,\mu\>=\epsilon$, and $|\varphi|\equiv 1$. Then, $\Gamma\not=\emptyset$.
		
		\item
		For each $m\in\Z^d$, define $\alpha_m\in\R/\Z$ by the formula $e^{-2\pi i\alpha_m}\hat\mu(m)=|\hat\mu(m)|$. If $m\in\Gamma$, then  
		\[
		\forall\nu\in\E, \quad \sign(\nu)(x)=e^{2\pi i\alpha_m}e^{2\pi im\cdot x} \quad \nu\text{-a.e.}
		\]
	\end{enumerate}
\end{proposition}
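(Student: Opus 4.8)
The plan is to treat the two parts separately: part (b) will follow immediately from Proposition~\ref{prop1}g once the correct sign function is written down, while part (a) rests on a structural fact about unimodular trigonometric polynomials that constitutes the real work.

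For part (b), I would simply exhibit the candidate sign and check that it is eligible for Proposition~\ref{prop1}g. Given $m\in\Gamma$, put $\varphi(x)=e^{2\pi i\alpha_m}e^{2\pi im\cdot x}$. Since $m\in\Gamma\subset\Lambda$, we have $\varphi\in C(\T^d;\Lambda)$ and $\|\varphi\|_\infty=1$, so $\varphi\in U$. Because $\hat\mu(m)=|\hat\mu(m)|\,e^{2\pi i\alpha_m}=\epsilon\,e^{2\pi i\alpha_m}$, a one-line computation gives $\<\varphi,\mu\>=e^{2\pi i\alpha_m}\,\overline{\hat\mu(m)}=\epsilon$. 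Proposition~\ref{prop1}g then yields $\varphi=\sign(\nu)$ $\nu$-a.e. for every $\nu\in\E$, which is precisely the assertion.

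For part (a), the decisive reduction is to ask which $\varphi\in U$ can be unimodular. I claim that any trigonometric polynomial $\varphi=\sum_{m\in\Lambda}a_me^{2\pi im\cdot x}$ with $|\varphi|\equiv1$ must be a single unimodular monomial: $\varphi=a_{m_0}e^{2\pi im_0\cdot x}$ for some $m_0\in\Lambda$ with $|a_{m_0}|=1$. Granting this, the hypothesis $\<\varphi,\mu\>=\epsilon$ gives $\epsilon=|\<\varphi,\mu\>|=|a_{m_0}|\,|\hat\mu(m_0)|=|\hat\mu(m_0)|$, so $m_0\in\Gamma$ and hence $\Gamma\neq\emptyset$.

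The monomial characterization is the step I expect to be the main obstacle, and I would prove it through the autocorrelation of the coefficients. Since $|\varphi|^2\equiv1$, all of its Fourier coefficients vanish except the zeroth; expanding $\varphi\overline{\varphi}$ and matching coefficients yields $\sum_m a_m\overline{a_{m-k}}=\delta_{k,0}$ for every $k\in\Z^d$, the sum being over the finite support $A=\{m\in\Lambda:a_m\neq0\}$. The case $k=0$ gives $\sum_m|a_m|^2=1$, so $A\neq\emptyset$. Suppose $\#A\geq2$. Fix a translation-invariant total order on $\Z^d$, such as the lexicographic order, and let $m^\ast$ and $m_\ast$ be the largest and smallest elements of $A$, so that $k_0=m^\ast-m_\ast\neq0$. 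Any nonzero summand $a_m\overline{a_{m-k_0}}$ in the autocorrelation at $k_0$ requires $m\in A$ and $m-k_0\in A$, forcing simultaneously $m\leq m^\ast$ and $m\geq m_\ast+k_0=m^\ast$; hence $m=m^\ast$ is the only contributing index and $\sum_m a_m\overline{a_{m-k_0}}=a_{m^\ast}\overline{a_{m_\ast}}\neq0$, contradicting the vanishing of the autocorrelation at $k_0\neq0$. Therefore $\#A=1$, and the normalization $\sum_m|a_m|^2=1$ forces $|a_{m_0}|=1$. The only place demanding care when $d\geq2$ is this extremal-pair uniqueness, for which translation-invariance of the order is essential; in dimension one it reduces to the classical fact that a Laurent polynomial of unit modulus on the circle is a monomial. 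The remaining computations and the appeal to Proposition~\ref{prop1}g in part~(b) are routine.
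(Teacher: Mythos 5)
Your proof is correct and takes essentially the same route as the paper: part (b) is the same computation (the paper verifies directly that $\int e^{-2\pi i\alpha_m}e^{-2\pi im\cdot x}\,d\nu=\|\nu\|$ and concludes, while you package the identical fact as an application of Proposition~\ref{prop1}g), and part (a) rests on the same reduction, namely that a unimodular trigonometric polynomial with spectrum in $\Lambda$ must be a single unimodular monomial, whence $\epsilon=|\hat\mu(m_0)|$ and $m_0\in\Gamma$. The only difference is that the paper asserts the monomial characterization without proof, whereas you supply a complete and correct justification via the autocorrelation identity $\sum_m a_m\overline{a_{m-k}}=\delta_{k,0}$ together with the translation-invariant (lexicographic) extremal-pair argument, which is exactly the detail needed to make the paper's one-line claim rigorous in all dimensions.
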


\begin{proof}
	\indent
	\begin{enumerate}[(a)]
		
		\item
		Since $\varphi\in U$ and $|\varphi|\equiv 1$, we must have $\varphi=e^{2\pi i\theta}e^{2\pi im\cdot x}$ for some $m\in\Lambda$ and $\theta\in\R$. Then, we have
		\[
		\epsilon=\<\varphi,\mu\>=e^{-2\pi i\theta}\ \overline{\hat\mu(m)},
		\]
		which shows that $m\in\Gamma$. 
		
		\item
		Suppose $m\in\Gamma$ and $\nu\in\E$. Then
		\[
		\int_{\T^d} e^{-2\pi i\alpha_m}e^{-2\pi im\cdot x}\ d\nu(x)
		=e^{-2\pi i\alpha_m}\hat\nu(m)
		=e^{-2\pi i\alpha_m}\hat\mu(m)
		=|\hat\mu(m)|
		=\epsilon
		=\|\nu\|.
		\]
		This shows $\sign(\nu)(x)=e^{2\pi i\alpha_m}e^{2\pi im\cdot x}$ $\nu$-a.e.  
	\end{enumerate}
\end{proof}

\subsection{An analogue of Beurling's theorem}
\label{section Beurling}

We are ready to prove Theorem \ref{thm BL}. 

\begin{proof}
	\indent
	
	\begin{enumerate}[(a)]
		\item
		By Proposition \ref{prop1}, there exists $\varphi\in U$ such that $\<\varphi,\mu\>=\epsilon$, and each minimal extrapolation is supported in the closed set
		\[
		S=\{x\in\T^d\colon |\varphi(x)|=1\}.
		\]
		Note that $|\varphi|\not\equiv 1$. In fact, if $|\varphi|\equiv 1$, then Proposition \ref{prop2}a implies $\Gamma\not=\emptyset$, which is a contradiction. We have $\varphi(x)=\sum_{m\in\Lambda} a_me^{2\pi im\cdot x}$ for some $a_m\in\C$. Consider the function 
		\begin{equation}
		\label{Phi}
		\Phi(x)
		=1-|\varphi(x)|^2
		=1-\sum_{m\in\Lambda}\sum_{n\in\Lambda} a_m\overline{a_n}e^{2\pi i(m-n)\cdot x}.
		\end{equation}
		Then, the minimal extrapolations are supported in the closed set
		\[
		S=\{x\in\T^d\colon \Phi(x)=0\}. 
		\]
		Note that $\Phi\not\equiv 0$ because $|\varphi|\not\equiv 1$. Since $\Phi$ is a non-trivial real-analytic function, $S$ is a set of $d$-dimensional Lebesgue measure zero. In particular, if $d=1$, then $S$ is a finite set of points.
		
		\item
		
		Let $m,n\in\Gamma$. There exist $\alpha_m,\alpha_n\in\R/\Z$ defined in Proposition \ref{prop2}b, such that
		\[
		\forall\nu\in\E,\quad 
		\sign(\nu)(x)
		=e^{2\pi i\alpha_m}e^{2\pi im\cdot x}=e^{2\pi i\alpha_n}e^{2\pi in\cdot x}\quad
		\nu\text{-a.e.}
		\]
		Set $\alpha_{m,n}=\alpha_m-\alpha_n\in\R/\Z$. Then each minimal extrapolation is supported in
		\[
		S_{m,n}=\{x\in\T^d\colon x\cdot(m-n)+\alpha_{m,n}\in\Z\}.
		\]
		Thus, \emph{each minimal extrapolation is supported in the set}
		\[
		S
		=\bigcap_{\substack{m,n\in\Gamma \\ m\not=n}} S_{m,n}
		=\bigcap_{\substack{m,n\in\Gamma \\ m\not=n}} \{x\in\T^d\colon x\cdot(m-n)+\alpha_{m,n}\in\Z\}.
		\]
		Note that $e^{2\pi i\alpha_{m,n}}=\hat\mu(m)/\hat\mu(n)$ because 
		\[
		e^{-2\pi i\alpha_m}\hat\mu(m)
		=|\hat\mu(m)|
		=\epsilon
		=|\hat\mu(n)|
		=e^{-2\pi i\alpha_n}\hat\mu(n).
		\]
		
		Suppose $\{p_1,\dots,p_d\}$ satisfies the hypothesis. By the support assertion that we just proved, there exists $\beta=(\beta_1,\beta_2,\dots,\beta_d)\in\T^d$ such that every minimal extrapolation is supported in
		\[
		S=\bigcap_{j=1}^d \{x\in\T^d\colon x\cdot p_j+\beta_j\in\Z\}.
		\]
		
		Let us explain the geometry of the situation before we proceed with the proof that $S$ is a lattice. Note that $\{x\in\T^d\colon x\cdot p_j+\beta_j\in\Z\}$ is a family of parallel and periodically spaced hyperplanes. Since the vectors, $p_1,p_2,\dots,p_d$, are assumed to be linearly independent, one family of hyperplanes is not parallel to any other family of hyperplanes. Hence, the intersection of $d$ non-parallel and periodically spaced hyperplanes is a lattice, see Figure \ref{fig1} for an illustration. 
		
		For the rigorous proof of this observation, first note that
		\[
		S=\{x\in\T^d\colon Px+\beta\in\Z^d\},
		\]
		where $P=(p_1,p_2,\dots,p_d)^t\in\Z^{d\times d}$ is invertible because its rows are linearly independent. Let $x_0\in\R^d$ be the solution to $Px+\beta=0$, and let $q_j\in\Q^d$ be the solution to $Px=e_j$, where $e_j$ is the standard basis vector for $\R^d$. Then $p_j\cdot q_k=\delta_{j,k}$, and $S$ is generated by the point $x_0$ and the lattice vectors, $q_1,q_2,\dots,q_d$. 
	\end{enumerate}
	
\end{proof}

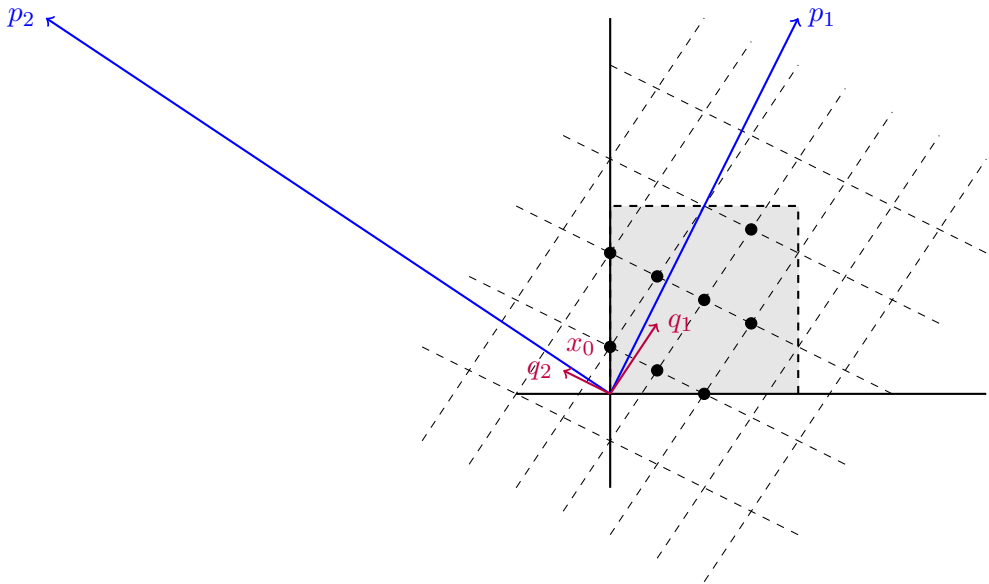
\begin{figure}[h]
	\centering
	\begin{tikzpicture}[scale=2.5]
	\draw[style=thick,dashed,fill=gray!20] (1,0)--(1,1)--(0,1)--(0,0);
	\draw[style=thick] (-0.5,0) -- (2,0);
	\draw[style=thick] (0,-0.5)--(0,2);
	\draw [style=dashed](-1,.25)--(1,-.75);
	\draw [style=dashed](-.75,.625)--(1.25,-.375);
	\draw [style=dashed](-.5,1)--(1.5,0);
	\draw [style=dashed](-.25,1.375)--(1.75,.375);
	\draw [style=dashed](-.0,1.75)--(2,.75);
	\draw [style=dashed](.5,-1)--(2,1.25);
	\draw [style=dashed](.25,-.875)--(1.75,1.375);
	\draw [style=dashed](0,-.75)--(1.5,1.5);
	\draw [style=dashed](-.25,-.625)--(1.25,1.625);
	\draw [style=dashed](-.5,-.5)--(1,1.75);
	\draw [style=dashed](-.75,-.375)--(.75,1.875);
	\draw [style=dashed](-1,-.25)--(.5,2);
	\draw [fill=black] (0,.25) circle (0.03);
	\draw [fill=black] (0,.75) circle (0.03);
	\draw [fill=black] (.25,.125) circle (0.03);
	\draw [fill=black] (.25,.625) circle (0.03);
	\draw [fill=black] (.5,0) circle (0.03);
	\draw [fill=black] (.5,.5) circle (0.03);
	\draw [fill=black] (.75,.375) circle (0.03);
	\draw [fill=black] (.75,.875) circle (0.03);
	\draw [color=purple] (-.025,.25) node[left]{$x_0$};
	\draw [->,style=thick,color=blue](0,0)--(1,2) node[right] {$p_1$};
	\draw [->,style=thick,color=blue] (0,0)--(-3,2) node[left] {$p_2$};
	\draw [->,style=thick,color=purple](0,0)--(.25,.375) node[right]{$q_1$};
	\draw [->,style=thick,color=purple](0,0)--(-.25,.125)node[left]{$q_2$};
	\end{tikzpicture}
	\caption{An illustration of Theorem \ref{thm BL}b, where $d=2$, $p_1=(1,2)$, $p_2=(-3,2)$, $\beta_1=1/2$, $\beta_2=-1/2$, $q_1=(1/4,3/8)$, and $q_2=(-1/4,1/8)$. The family of hyperplanes are the dashed lines, the lattice $S$ is the black dots, and the shaded region is $[0,1)^2$.}
	\label{fig1}
\end{figure}

\begin{remark}
	Example \ref{e5} provides an example where $\#\Gamma=3$ and there exists a singular continuous minimal extrapolation. This demonstrates that the conclusion of Theorem \ref{thm BL}b is optimal. 
\end{remark}

\subsection{The admissibility range for $\epsilon$}
\label{section admissibility}

While the mathematical theory we have developed connects the set $\Gamma$ with the support of the minimal extrapolations, the difficulty of applying the theory is that in general, $\epsilon$ is unknown. However, in many important situations, it is possible to deduce the value of $\epsilon$. We say $[A,B]\subset\R^+$ is an \emph{admissiblity range} for $\epsilon$ provided that $0\leq A\leq\epsilon\leq B$. The following proposition shows that we have $A=\|\hat\mu\|_{\ell^\infty(\Lambda)}$ and $B=\|\mu\|$ as an admissibility range, and $B$ can be improved in certain situations. 

\begin{proposition}
	\label{prop3}
	Let $\mu\in M(\T^d)$ and let $\Lambda\subset\Z^d$ be a finite subset. We have the lower and upper bounds,
	\begin{equation}
		\label{b}
		\|\hat\mu\|_{\ell^\infty(\Lambda)}\leq \epsilon\leq \|\mu\|.
	\end{equation}
	
	Further, if there exists an extrapolation $\nu\in M(\T^d)$ and $\|\nu\|<\|\mu\|$, then
	\begin{equation}
		\label{c}
		\|\hat\mu\|_{\ell^\infty(\Lambda)}\leq \epsilon\leq\|\nu\|<\|\mu\|.
	\end{equation}
\end{proposition}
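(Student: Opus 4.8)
The plan is to read off all four inequalities in (\ref{b}) and (\ref{c}) directly from the definition of $\epsilon$ as the optimal value of Problem (\ref{SR}), together with Proposition \ref{prop1}e; no machinery beyond what has already been established is required, and the whole argument is a bookkeeping exercise once the right competitors are identified.

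First I would dispatch the upper bound in (\ref{b}) by noting that $\mu$ is trivially an extrapolation of itself from $\Lambda$, since $\hat\mu=\hat\mu$ on $\Lambda$. Because $\epsilon$ is by definition the minimal total variation norm over all extrapolations, testing against this particular competitor gives $\epsilon\leq\|\mu\|$ at once. For the lower bound I would invoke Proposition \ref{prop1}e, which asserts $\epsilon=\max_{f\in U}|\langle f,\mu\rangle|$, and evaluate it at pure frequencies: for each fixed $m\in\Lambda$ the function $f(x)=e^{2\pi i m\cdot x}$ lies in $C(\T^d;\Lambda)$ with $\|f\|_\infty=1$, hence $f\in U$, and a short computation with the conjugate-linear pairing $\langle f,\mu\rangle=\int f\,\overline{d\mu}$ yields $|\langle f,\mu\rangle|=|\overline{\hat\mu(m)}|=|\hat\mu(m)|$. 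Maximizing over these $f$ and then over $m\in\Lambda$ produces $\epsilon\geq\|\hat\mu\|_{\ell^\infty(\Lambda)}$. (Alternatively one can avoid Proposition \ref{prop1}e entirely and argue directly: for any minimal extrapolation $\nu$ and any $m\in\Lambda$, the normalization $|e^{-2\pi i m\cdot x}|\equiv 1$ forces $|\hat\mu(m)|=|\hat\nu(m)|\leq\|\nu\|=\epsilon$.)

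Finally, (\ref{c}) needs only the observation that its hypothesis furnishes a strictly better competitor. If $\nu$ is any extrapolation with $\|\nu\|<\|\mu\|$, then by minimality of $\epsilon$ over all extrapolations we have $\epsilon\leq\|\nu\|$, and concatenating this with the lower bound already established gives the full chain $\|\hat\mu\|_{\ell^\infty(\Lambda)}\leq\epsilon\leq\|\nu\|<\|\mu\|$. I expect no genuine obstacle anywhere: the only point demanding care is keeping track of the complex conjugation in the pairing convention when identifying $|\langle e^{2\pi i m\cdot x},\mu\rangle|$ with $|\hat\mu(m)|$, and every inequality ultimately collapses to the definition of $\epsilon$ as an infimum together with the unit modulus of the exponentials.
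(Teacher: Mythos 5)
Your proposal is correct and essentially matches the paper's proof: the upper bounds in (\ref{b}) and (\ref{c}) follow, exactly as you say, from $\mu$ and $\nu$ being admissible competitors in the minimization defining $\epsilon$, and your parenthetical alternative for the lower bound (the Fourier coefficients of a minimal extrapolation are dominated by its total variation norm, which equals $\epsilon$) is precisely the argument the paper gives. Your primary route through Proposition \ref{prop1}e and the pure exponentials $e^{2\pi i m\cdot x}$ is merely the dual phrasing of that same fact, so there is no substantive difference.
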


\begin{proof}

	To see the lower bound for $\epsilon$ in (\ref{b}) and (\ref{c}), let $\sigma$ be a minimal extrapolation. Then,
	\[
	\sup_{m\in\Lambda}|\hat\mu(m)|
	=\sup_{m\in\Lambda}|\hat\sigma(m)|
	\leq \|\sigma\|
	=\epsilon.
	\]
	The upper bounds, $\epsilon\leq\|\mu\|$ and $\epsilon\leq\|\nu\|$ in (\ref{b}) and (\ref{c}), follow by definition of $\epsilon$.
\end{proof}

\begin{figure}[h]
	\centering
	\begin{tikzpicture}[scale=1]
	\path [fill=gray!20] (-3,2) rectangle (3,4);
	\path [fill=gray!40] (-3,2) rectangle (3,3.2);
	\draw [style=dashed] (-3.5,2)--(3.5,2) node [right] {$\|\hat\mu\|_{\ell^\infty(\Lambda)}$};
	\draw [style=dashed] (-3.5,4)--(3.5,4) node [right] {$\|\mu\|$};
	\draw [style=dashed] (-3.5,3.2)--(3.5,3.2) node [right] {$\|\nu\|$};
	\draw[style=thick] (-3.5,0) -- (3.5,0);
	\foreach \x in {-3,...,3}
	\draw (\x, 0.1) -- (\x, -0.1) node [below] {\x};
	\draw (0,-0.7) node [below] {$\Lambda$};
	\draw [fill=black] (-3,1.8) circle (0.05);
	\draw [fill=black] (-2,1.6) circle (0.05);
	\draw [fill=black] (-1,1.8) circle (0.05);
	\draw [fill=black] (0,2) circle (0.05);
	\draw [fill=black] (1,1.6) circle (0.05);
	\draw [fill=black] (2,1.7) circle (0.05);
	\draw [fill=black] (3,2) circle (0.05);
	\end{tikzpicture}
	\caption{The black points represent the values of $|\hat\mu|$ on $\Lambda$. The union of the light and dark gray regions is the admissibility range $[\|\hat\mu\|_{\ell^\infty(\Lambda)},\|\mu\|]$, whereas the dark gray region is an improved admissibility range $[\|\hat\mu\|_{\ell^\infty(\Lambda)},\|\nu\|]$.}
	\label{fig2}
\end{figure}
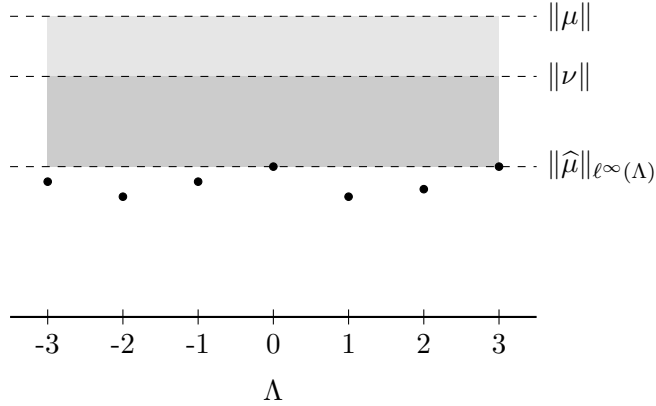

\begin{remark}
	By tightening the admissibility range for $\epsilon$, we can deduce information about the minimal extrapolations. The simplest case is when $\epsilon=\|\hat\mu\|_{\ell^\infty(\Lambda)}=\|\mu\|$, see Examples \ref{e1} and \ref{e2}. The next simplest case is when there exists an extrapolation $\nu$, such that $\epsilon=\|\hat\mu\|_{\ell^\infty(\Lambda)}=\|\nu\|$, see Example \ref{e3}. A more complicated case is when $\|\hat\mu\|_{\ell^\infty(\Lambda)}<\epsilon$, see Example \ref{e4}.
\end{remark}

Cand\`{e}s and Fernandez-Granda \cite{candes2014towards} focused entirely on the case that $\|\mu\|=\epsilon$ because this is a necessary condition to super-resolve $\mu$. In contrast to their result, our theory is better suited for situations when $\|\hat\mu\|_{\ell^\infty}=\epsilon$. This is because Theorem \ref{thm BL}b is strongest when $|\hat\mu(m)|=\epsilon$ for many points $m\in\Lambda$, i.e., when $\#\Gamma$ is large. Hence, our theory is better suited for deducing the impossibility of super-resolution reconstruction. Remarkably though, the following proposition shows that if Algorithm \ref{alg CFG} fails, then $\epsilon=\|\hat\mu\|_{\ell^\infty}$, and since we are given $\hat\mu\mid_\Lambda$, we immediately get the exact value of $\epsilon$. Recall that the algorithm fails if the function $\varphi\in U$ computed in the first step of the algorithm satisfies $\<\varphi,\mu\>=\epsilon$ and $|\varphi|\equiv 1$.

\begin{proposition}
	\label{prop4}
	Let $\mu\in M(\T^d)$ and let $\Lambda\subset\Z^d$ be a finite subset. Suppose there exists $\varphi\in U$ such that $|\varphi|\equiv 1$ and $\<\varphi,\mu\>=\epsilon$. Then, $\Gamma\not=\emptyset$ and  $\epsilon=\|\hat\mu\|_{\ell^\infty(\Lambda)}$. 
\end{proposition}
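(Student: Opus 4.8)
The plan is to prove the two conclusions separately, with essentially all the work concentrated in the equality $\epsilon=\|\hat\mu\|_{\ell^\infty(\Lambda)}$. The assertion $\Gamma\not=\emptyset$ is already exactly the content of Proposition \ref{prop2}a under the present hypotheses ($\varphi\in U$, $\<\varphi,\mu\>=\epsilon$, $|\varphi|\equiv 1$), so it may simply be cited; alternatively, it will drop out for free from the computation below. For the equality, note that Proposition \ref{prop3} already supplies the lower bound $\|\hat\mu\|_{\ell^\infty(\Lambda)}\leq\epsilon$, so the entire task reduces to establishing the reverse inequality $\epsilon\leq\|\hat\mu\|_{\ell^\infty(\Lambda)}$.

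The key structural step is the observation that a trigonometric polynomial $\varphi\in C(\T^d;\Lambda)$ with $|\varphi|\equiv 1$ must be a single character, i.e. $\varphi(x)=e^{2\pi i\theta}e^{2\pi im\cdot x}$ for some $m\in\Lambda$ and $\theta\in\R$. Writing $\varphi(x)=\sum_{m\in\Lambda}a_me^{2\pi im\cdot x}$, the function $|\varphi|^2=\varphi\overline{\varphi}$ is itself a trigonometric polynomial whose Fourier coefficient at frequency $k$ equals $\sum_{m-n=k}a_m\overline{a_n}$. Since $|\varphi|^2\equiv 1$, the coefficient at $k=0$ equals $\sum_m|a_m|^2=1$ while every coefficient at $k\not=0$ vanishes. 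To rule out two or more nonzero terms, I would fix a total order on $\Z^d$ compatible with addition (lexicographic order) and let $m_+,m_-$ be the maximal and minimal elements of $\supp(\varphi)$. The pair $(m_+,m_-)$ is the unique pair in $\supp(\varphi)\times\supp(\varphi)$ achieving the largest difference $m_+-m_-$, so the coefficient of $|\varphi|^2$ at $m_+-m_-$ is $a_{m_+}\overline{a_{m_-}}\not=0$; if $m_+\not=m_-$ this is a nonzero coefficient at a nonzero frequency, a contradiction. Hence $\supp(\varphi)$ is a single point, as claimed.

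Once this is in hand the conclusion is immediate. With $\varphi(x)=e^{2\pi i\theta}e^{2\pi im\cdot x}$, the definition of the pairing gives
\[
\epsilon=\<\varphi,\mu\>=e^{2\pi i\theta}\int_{\T^d}e^{2\pi im\cdot x}\ \overline{d\mu(x)}=e^{2\pi i\theta}\ \overline{\hat\mu(m)}.
\]
Taking absolute values and recalling $\epsilon\geq 0$ yields $\epsilon=|\hat\mu(m)|\leq\|\hat\mu\|_{\ell^\infty(\Lambda)}$, since $m\in\Lambda$. This is precisely the reverse inequality, so combined with Proposition \ref{prop3} we obtain $\epsilon=\|\hat\mu\|_{\ell^\infty(\Lambda)}$. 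As a bonus, $|\hat\mu(m)|=\epsilon$ shows $m\in\Gamma$, reconfirming that $\Gamma\not=\emptyset$ without appeal to Proposition \ref{prop2}a.

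I expect the only genuine obstacle to be the rigorous justification of the ``modulus-one polynomial is a monomial'' claim; everything else is a one-line computation together with citations of Propositions \ref{prop2} and \ref{prop3}. The extreme-frequency argument above is elementary but does require care in $d\geq 2$, where ``largest frequency'' is meaningful only after choosing a translation-invariant total order so that the maximal difference is attained by a unique pair.
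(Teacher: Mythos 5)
Your proof is correct and follows essentially the same route as the paper: the paper's proof simply cites Proposition \ref{prop2}a to get $\Gamma\not=\emptyset$, notes that any $m\in\Gamma$ satisfies $|\hat\mu(m)|=\epsilon$ by definition of $\Gamma$, and combines this with the lower bound of Proposition \ref{prop3}. The only difference is that you additionally supply a rigorous justification (the lexicographic extreme-frequency argument) of the claim that a unimodular trigonometric polynomial in $C(\T^d;\Lambda)$ is a single character $e^{2\pi i\theta}e^{2\pi im\cdot x}$, a fact the paper's proof of Proposition \ref{prop2}a asserts without detail.
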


\begin{proof}
	By Proposition \ref{prop2}, we must have $\Gamma\not=\emptyset$. Let $m\in\Gamma$ and by definition of $\Gamma$, we have $|\hat\mu(m)|=\epsilon$. This combined with the lower bound in Proposition \ref{prop3} proves that $\epsilon=\|\hat\mu\|_{\ell^\infty(\Lambda)}$. 
\end{proof}


\subsection{On uniqueness and non-uniqueness of minimal extrapolations}
\label{section uniqueness}

Theorem \ref{thm BL} provides sufficient conditions for the minimal extrapolations to be supported in a discrete set, but it does not provide sufficient conditions for uniqueness. Since a family of discrete measures supported on a common set behaves essentially like a vector, we use basic linear algebra to address the question of uniqueness when the minimal extrapolations are supported in a discrete set. 

\begin{proposition}
	\label{uniqueness}
	Let $\mu\in M(\T^d)$ and let $\Lambda=\{m_1,m_2,\dots,m_J\}\subset\Z^d$ be a finite subset. Suppose there exists a finite set, $\{x_k\in\T^d\colon k=1,2,\dots,K\}$, such that each minimal extrapolation is supported in this set. Suppose that the matrix,
	\begin{equation}
	\label{matrix}
	E(m_1,\dots,m_J;x_1,\dots,x_K)=
	\begin{pmatrix}
	e^{-2\pi im_1\cdot x_1} &e^{-2\pi im_1\cdot x_2} &\cdots &e^{-2\pi im_1\cdot x_K} \\
	e^{-2\pi im_2\cdot x_1} &e^{-2\pi im_2\cdot x_2} &\cdots &e^{-2\pi im_2\cdot x_K} \\
	\vdots &\vdots & &\vdots \\
	e^{-2\pi im_J\cdot x_1} &e^{-2\pi im_J\cdot x_2} &\cdots &e^{-2\pi im_J\cdot x_K}
	\end{pmatrix},
	\end{equation}
	has full column rank (this can only occur if $J\geq K$). Then, the minimal extrapolation is unique. 
\end{proposition}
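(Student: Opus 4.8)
The plan is to reduce the problem to elementary linear algebra by exploiting the fact that a bounded Radon measure supported in a finite set is automatically discrete. Since the hypothesis guarantees that every minimal extrapolation $\nu\in\E$ satisfies $\supp(\nu)\subset\{x_1,\dots,x_K\}$, I would first argue that each such $\nu$ must take the form
\[
\nu=\sum_{k=1}^K c_k\,\delta_{x_k}, \qquad c_k=\nu(\{x_k\})\in\C.
\]
Indeed, the complement of the finite set carries no $|\nu|$-mass and the singletons $\{x_k\}$ are measurable, so $\nu$ is concentrated on the points $x_k$ and is precisely the indicated linear combination of point masses.

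Next, I would encode the extrapolation constraint as a matrix equation. Evaluating the Fourier coefficient of $\nu$ at $m_j\in\Lambda$ gives
\[
\hat\nu(m_j)=\int_{\T^d} e^{-2\pi i m_j\cdot x}\ d\nu(x)=\sum_{k=1}^K c_k\,e^{-2\pi i m_j\cdot x_k}.
\]
Writing $c=(c_1,\dots,c_K)^t$ and $b=(\hat\mu(m_1),\dots,\hat\mu(m_J))^t$, this says exactly that $Ec=b$, where $E$ is the matrix in (\ref{matrix}). Because $\nu$ is in particular an extrapolation, $\hat\nu=\hat\mu$ on $\Lambda$, so the coefficient vector of \emph{every} minimal extrapolation is a solution of the single linear system $Ec=b$.

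Finally, I would invoke the full-column-rank hypothesis to conclude uniqueness. Proposition \ref{prop1}a guarantees $\E\not=\emptyset$, so such a solution exists. If $\nu_1,\nu_2\in\E$ have coefficient vectors $c^{(1)}$ and $c^{(2)}$, then $E(c^{(1)}-c^{(2)})=b-b=0$; full column rank means $E$ has trivial kernel, forcing $c^{(1)}=c^{(2)}$ and hence $\nu_1=\nu_2$. This also accounts for the parenthetical remark $J\geq K$, since a $J\times K$ matrix can have rank $K$ only when $J\geq K$.

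There is no serious obstacle here: once the support hypothesis is in hand, the entire argument is bookkeeping. The only point demanding a little care is the passage from ``supported in a finite set'' to ``discrete measure with prescribed coefficients,'' together with keeping straight that it is the \emph{column} rank — i.e.\ injectivity of the map $c\mapsto Ec$ — and not the row rank that delivers uniqueness of the coefficients.
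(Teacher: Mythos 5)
Your proposal is correct and follows essentially the same argument as the paper: identify measures supported in the finite set with coefficient vectors, translate the Fourier constraints into the linear system governed by $E$, and use the full-column-rank hypothesis (trivial kernel) to force the difference of any two minimal extrapolations to vanish. The only cosmetic difference is that the paper passes immediately to the difference $\nu$ of two minimal extrapolations and solves $Ea=0$, whereas you write $Ec=b$ for each minimal extrapolation first and then subtract; the content is identical.
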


\begin{proof}
	Let $\nu$ be the difference of any two minimal extrapolations. Then $\nu$ is also supported in $\{x_k\colon k=1,2,\dots, K\}$ and it is of the form $\nu=\sum_{k=1}^K a_k \delta_{x_k}$. Since $\hat\nu=0$ on $\Lambda$, we have $0=\hat\nu(m_j)=\sum_{k=1}^K a_k e^{-2\pi im_j\cdot x_k}$ for $j=1,2,\dots,J$, which is equivalent to the linear system $Ea=0$, where $a=(a_1,\dots,a_K)\in\C^K$. By assumption, $E$ has full column rank. This implies $a=0$. 
\end{proof}

To finish this subsection, we address the situation when $\#\Gamma=1$, i.e., the missing case of Theorem \ref{thm BL}. We say a measure $\mu\in M(\T^d)$ is a \emph{positive measure} if $\mu(A)\geq 0$ for all Borel sets $A\subset\T^d$. A sequence $\{a_m\in\C\colon m\in\Z^d\}$ is a \emph{positive-definite sequence} if for all sequences $\{b_m\in\C\colon m\in\Z^d\}$ of finite support, we have
\[
\sum_{m,n\in\Z^d} a_{m-n}b_m\overline{b_n}\geq 0.
\]
For $y\in\T^d$, let $T_y\colon M(\T^d)\to M(\T^d)$ be the translation operator defined by $T_y\mu(x)=\mu(x-y)$. For $n\in\Z^d$, let $M_n\colon M(\T^d)\to M(\T^d)$ be the modulation operator defined by $M_n\mu(x)=e^{2\pi in\cdot x}\mu(x)$. 

\begin{proposition}
	\label{prop5}
	Let $\mu\in M(\T^d)$ and let $\Lambda\subset\Z^d$ be a finite subset. Suppose there exists $n\in\Lambda$ such that $(M_{-n}\mu)^\wedge\mid_{\Lambda-n}$ extends to a positive-definite sequence on $\Z^d$. Then, $n\in\Gamma$, and each positive-definite extension of $(M_{-n}\mu)^\wedge\mid_{\Lambda-n}$ corresponds to a positive measure $\nu$, such that $M_n\nu$ is a minimal extrapolation of $\mu$ from $\Lambda$. 
\end{proposition}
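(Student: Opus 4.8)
The plan is to translate the hypothesis, which is phrased in terms of the modulated measure $M_{-n}\mu$, into a statement about $\hat\mu$ on $\Lambda$, then produce the candidate minimal extrapolation via the Herglotz--Bochner theorem and verify minimality using the admissibility range of Proposition \ref{prop3}. First I would record the elementary Fourier identity $(M_{-n}\mu)^\wedge(m)=\hat\mu(m+n)$, obtained directly from the definitions of $M_{-n}$ and of the Fourier transform. Consequently, for $k\in\Lambda-n$ we have $(M_{-n}\mu)^\wedge(k)=\hat\mu(k+n)$, so the restriction $(M_{-n}\mu)^\wedge\mid_{\Lambda-n}$ is merely a reindexing of the given data $\hat\mu\mid_\Lambda$. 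Crucially, since $n\in\Lambda$ we have $0\in\Lambda-n$, a fact I will exploit to control the total variation.

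Next, given a positive-definite extension $\{c_m\colon m\in\Z^d\}$ of $(M_{-n}\mu)^\wedge\mid_{\Lambda-n}$, I would invoke the Herglotz--Bochner theorem: a sequence on $\Z^d$ is positive-definite, in the sense defined above, if and only if it is the Fourier transform of a unique positive measure. This yields a positive $\nu\in M(\T^d)$ with $\hat\nu=c$ on all of $\Z^d$, and this $\nu$ is exactly the positive measure the statement attaches to the given extension. I would then check that $M_n\nu$ is an extrapolation of $\mu$ from $\Lambda$: for $m\in\Lambda$, a one-line computation gives $(M_n\nu)^\wedge(m)=\hat\nu(m-n)=c_{m-n}=\hat\mu(m)$, the last equality holding because $m-n\in\Lambda-n$ and $c$ agrees with the data there.

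The norm computation is the heart of the argument. Because modulation by a unimodular factor preserves total variation, $\|M_n\nu\|=\|\nu\|$, and since $\nu$ is positive, $\|\nu\|=\nu(\T^d)=\hat\nu(0)=c_0$. Using $0\in\Lambda-n$ gives $c_0=(M_{-n}\mu)^\wedge(0)=\hat\mu(n)$, and positive-definiteness forces $c_0$ to be a nonnegative real; hence $\|M_n\nu\|=\hat\mu(n)=|\hat\mu(n)|$. I would then close with Proposition \ref{prop3}: since $M_n\nu$ is an extrapolation, $\|M_n\nu\|\geq\epsilon$, while its lower bound gives $\epsilon\geq\|\hat\mu\|_{\ell^\infty(\Lambda)}\geq|\hat\mu(n)|=\|M_n\nu\|$ (using $n\in\Lambda$). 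Therefore all these quantities coincide: $\epsilon=|\hat\mu(n)|$, so $n\in\Gamma$, and $\|M_n\nu\|=\epsilon$, so $M_n\nu$ is a minimal extrapolation.

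The only genuinely external ingredient is the Herglotz--Bochner correspondence between positive-definite sequences and positive measures, which is what makes the phrase ``corresponds to a positive measure'' precise; I would take care that its sign convention matches the paper's definition of the Fourier transform and of positive-definiteness, a short check confirming that $a_m=\hat\sigma(m)=\int e^{-2\pi i m\cdot x}\,d\sigma$ with $\sigma\geq 0$ gives $\sum_{m,k}a_{m-k}b_m\overline{b_k}=\int \bigl|\sum_m b_m e^{-2\pi i m\cdot x}\bigr|^2\,d\sigma\geq 0$. Everything else is bookkeeping; the one step that rewards attention is the identification $\|M_n\nu\|=|\hat\mu(n)|$, which hinges on the two observations that $\nu$ is positive and that $0\in\Lambda-n$.
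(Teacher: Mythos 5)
Your proof is correct and follows essentially the same route as the paper: invoke Herglotz's theorem to produce the positive measure $\nu$, observe that $M_n\nu$ is an extrapolation so $\|M_n\nu\|\geq\epsilon$, and then use positivity to write $\|\nu\|=\hat\nu(0)=|\hat\mu(n)|\leq\|\hat\mu\|_{\ell^\infty(\Lambda)}\leq\epsilon$ via Proposition \ref{prop3}, forcing equality throughout and hence $n\in\Gamma$. The extra bookkeeping you include (the reindexing identity $(M_{-n}\mu)^\wedge(m)=\hat\mu(m+n)$ and the sign-convention check for Herglotz--Bochner) is left implicit in the paper but matches it exactly.
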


\begin{proof}
	Extend $(M_{-n}\mu)^\wedge\mid_{\Lambda-n}$ to a positive-definite sequence. By Herglotz' theorem, there exists a positive measure $\nu\in M(\T^d)$ such that $\hat\nu=(M_{-n}\mu)^\wedge=T_{-n}\hat\mu$ on $\Lambda-n$. Then, $M_n\nu$ is an extrapolation of $\mu$ from $\Lambda$, which shows that 
	\[
	\|\nu\|=\|M_n\nu\|\geq \epsilon(\mu,\Lambda).
	\]
	For the reverse inequality, since $\nu$ is a positive measure, we have $\|\nu\|=\hat\nu(0)$. Then,
	\[
	\|M_n\nu\|
	=\|\nu\|
	=\hat\nu(0)
	=|\hat\nu(0)|
	=|\hat\mu(n)|
	\leq \|\hat\mu\|_{\ell^\infty(\Lambda)}
	\leq \epsilon(\mu,\Lambda),
	\]
	where the last inequality follows by Proposition \ref{prop3}. This shows that $|\hat\mu(n)|=\|\nu\|=\epsilon$, which proves that $n\in\Gamma$ and $M_n\nu$ is a minimal extrapolation of $\mu$ from $\Lambda$.
\end{proof}

\begin{remark}
	Beurling \cite{beurling1989interpolation} and Esseen \cite{esseen1945fourier} essentially proved the analogue of the Proposition for the special case that $n=0$, for $\R$ instead of $\T^d$. Proposition \ref{prop5} generalizes their result to handle situations when $0\not\in\Lambda$. This is important because from the viewpoint of Proposition \ref{symmetries}c, the super-resolution problem is invariant under simultaneous translations of $\hat\mu$ and $\Lambda$, which means that $0\in\Z^d$ is no more special than any other point $n\in\Z^d$. 
\end{remark}

\begin{remark}
	\label{missing}
	Proposition \ref{prop5} suggests that the case $\#\Gamma=1$ is special compared to the cases $\#\Gamma=0$ or $\#\Gamma\geq 2$ because, when $\#\Gamma=1$, there may exist absolutely continuous minimal extrapolations. In Example \ref{e1}, $\#\Gamma=1$, and there exist uncountably many discrete and positive absolutely continuous minimal extrapolations. 
\end{remark}

\begin{remark}
	\label{MEM}
	Suppose that $\mu\in M(\T)$ and $\hat\mu\mid_\Lambda$ can be extended to a positive-definite sequence on $\Z$. In theory, there are an infinite number of such extensions, and one particular method of choosing such an extension is called the \emph{Maximum Entropy Method} (MEM). According to MEM, one extends $\hat\mu\mid_\Lambda$ to the positive-definite sequence $\{a_m\in\C\colon m\in\Z\}$ whose corresponding density function $f\in L^1(\T)$ is the unique maximizer of a specific logarithmic integral associated with the physical notion of entropy, e.g., see \cite[Theorems 3.6.3 and 3.6.6]{benedetto1996harmonic}. MEM is related to spectral estimation methods \cite{childers1978modern}, the maximum likelihood method \cite{childers1978modern}, and moment problems \cite{landau1987maximum}.
\end{remark}

\subsection{Basic properties of minimal extrapolation}
\label{section basic}

Our next goal is to examine the symmetries of the minimal extrapolations. We are interested in the vector space operations, namely, addition of measures and the multiplication of measures by complex constants. We are also interested in the operations that are well-behaved under the Fourier transform on $\T^d$, namely, translation on the torus, modulation by integers, convolution of measures, and product of measures. 

\begin{proposition}
	\label{symmetries}
	Let $\mu\in M(\T^d)$, $\Lambda\subset\Z^d$ be a finite subset, $c\in\C$ non-zero, and $y\in\T^d$. 
	\begin{enumerate}[(a)]
		\item
		Multiplication by constants is bijective: $c\ \epsilon(\mu,\Lambda)=\epsilon(c\mu,\Lambda)$, and   $\nu\in\E(\mu,\Lambda)$ if and only if $c\nu\in\E(c\mu,\Lambda)$. 
		\item
		Translation is bijective: $\epsilon(\mu,\Lambda)=\epsilon(T_y\mu,\Lambda)$, and $\nu\in\E(\mu,\Lambda)$ if and only if $T_y\nu\in\E(T_y\mu,\Lambda)$. 
		\item
		Minimal extrapolation is invariant under simultaneous shifts of \ $\hat\mu$ and $\Lambda$: $\epsilon(\mu,\Lambda)=\epsilon(M_n\mu,\Lambda+n)$, and $\E(\mu,\Lambda)=\E(M_n\mu,\Lambda+n)$.  
		\item
		The product of minimal extrapolations is a minimal extrapolation for the product: For $j=1,2$, let $\mu_j\in M(\T^{d_j})$, let $\Lambda_j\subset\Z^{d_j}$ be a finite subset, and let $\nu_j\in\E(\mu_j,\Lambda_j)$. Then $\epsilon(\mu_1,\Lambda_1)\epsilon(\mu_2,\Lambda_2)=\epsilon(\mu_1\times\mu_2,\Lambda_1\times\Lambda_2)$, and $\nu_1\times\nu_2\in\E(\mu_1\times\mu_2,\Lambda_1\times\Lambda_2)$. 
	\end{enumerate}
\end{proposition}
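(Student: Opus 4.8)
The plan is to treat all four parts under one principle: each listed operation $T$ acts on $M(\T^{d})$ so that (i) it rescales or preserves the total variation norm by an explicit factor, and (ii) it transforms the interpolation constraint $\hat\nu=\hat\mu$ on $\Lambda$ into the corresponding constraint for the transformed problem. Once the feasible sets are matched and the norm behavior is known, the minimizers must correspond. I would first record the Fourier identities $\widehat{c\mu}=c\,\hat\mu$, $\widehat{T_y\mu}(m)=e^{-2\pi im\cdot y}\hat\mu(m)$, $\widehat{M_n\mu}(m)=\hat\mu(m-n)$, and $\widehat{\mu_1\times\mu_2}(m_1,m_2)=\hat\mu_1(m_1)\hat\mu_2(m_2)$, together with the norm facts $\|c\mu\|=|c|\,\|\mu\|$, $\|T_y\mu\|=\|M_n\mu\|=\|\mu\|$ (translation is measure-preserving, modulation multiplies by a unimodular function), and the multiplicativity $\|\mu_1\times\mu_2\|=\|\mu_1\|\,\|\mu_2\|$.

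For parts (a), (b), (c) the operation is invertible on $M(\T^{\cdot})$, so the argument is short and symmetric. For (a), since $\widehat{c\nu}=c\,\hat\nu$, the map $\nu\mapsto c\nu$ is a bijection between extrapolations of $\mu$ and of $c\mu$ from $\Lambda$, and it scales the norm by $|c|$; hence it carries $\E(\mu,\Lambda)$ onto $\E(c\mu,\Lambda)$ and $\epsilon(c\mu,\Lambda)=|c|\,\epsilon(\mu,\Lambda)$. For (b), $\nu\mapsto T_y\nu$ preserves the norm and, because the factor $e^{-2\pi im\cdot y}$ multiplies $\hat\nu$ and $\hat\mu$ identically, it preserves the constraint on $\Lambda$; thus it is a norm-preserving bijection of feasible sets, giving the stated equivalence. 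For (c), $\nu\mapsto M_n\nu$ preserves the norm, and since $\widehat{M_n\nu}(m)=\hat\nu(m-n)$, the condition ``$\widehat{M_n\nu}=\widehat{M_n\mu}$ on $\Lambda+n$'' is exactly ``$\hat\nu=\hat\mu$ on $\Lambda$''; hence $\nu\in\E(\mu,\Lambda)$ if and only if $M_n\nu\in\E(M_n\mu,\Lambda+n)$, which yields $\epsilon(\mu,\Lambda)=\epsilon(M_n\mu,\Lambda+n)$ and the set relation $\E(M_n\mu,\Lambda+n)=M_n\,\E(\mu,\Lambda)$.

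Part (d) is the one genuinely new argument, and the reverse inequality there is the \emph{main obstacle}, since the product is bilinear rather than invertible and only one direction is immediate. The easy direction is the upper bound: $\nu_1\times\nu_2$ is feasible for the product problem because $\widehat{\nu_1\times\nu_2}(m_1,m_2)=\hat\nu_1(m_1)\hat\nu_2(m_2)=\widehat{\mu_1\times\mu_2}(m_1,m_2)$ on $\Lambda_1\times\Lambda_2$, and multiplicativity of total variation gives $\|\nu_1\times\nu_2\|=\epsilon(\mu_1,\Lambda_1)\,\epsilon(\mu_2,\Lambda_2)$, whence $\epsilon(\mu_1\times\mu_2,\Lambda_1\times\Lambda_2)\le\epsilon(\mu_1,\Lambda_1)\,\epsilon(\mu_2,\Lambda_2)$. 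For the reverse bound I would use the dual characterization of Proposition \ref{prop1}(e),(f): choose $\varphi_j\in U(\T^{d_j};\Lambda_j)$ with $\<\varphi_j,\mu_j\>=\epsilon(\mu_j,\Lambda_j)$ and form the tensor product $\varphi(x_1,x_2)=\varphi_1(x_1)\varphi_2(x_2)$. Then $\varphi$ is a trigonometric polynomial with frequencies in $\Lambda_1\times\Lambda_2$ and $\|\varphi\|_\infty=\|\varphi_1\|_\infty\|\varphi_2\|_\infty\le1$, so $\varphi\in U(\T^{d_1+d_2};\Lambda_1\times\Lambda_2)$; and by Fubini the conjugated pairing factors, $\<\varphi,\mu_1\times\mu_2\>=\<\varphi_1,\mu_1\>\<\varphi_2,\mu_2\>=\epsilon(\mu_1,\Lambda_1)\,\epsilon(\mu_2,\Lambda_2)$. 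Proposition \ref{prop1}(e) then gives $\epsilon(\mu_1\times\mu_2,\Lambda_1\times\Lambda_2)\ge|\<\varphi,\mu_1\times\mu_2\>|=\epsilon(\mu_1,\Lambda_1)\,\epsilon(\mu_2,\Lambda_2)$. Combining the two inequalities produces equality, and since $\nu_1\times\nu_2$ is feasible and attains this value, it lies in $\E(\mu_1\times\mu_2,\Lambda_1\times\Lambda_2)$.

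The steps needing the most care are the two standard-but-nontrivial measure-theoretic facts underlying (d): the multiplicativity $\|\mu_1\times\mu_2\|=\|\mu_1\|\,\|\mu_2\|$ of the total variation norm, and the Fubini factorization $\int\varphi_1\varphi_2\,\overline{d(\mu_1\times\mu_2)}=\big(\int\varphi_1\,\overline{d\mu_1}\big)\big(\int\varphi_2\,\overline{d\mu_2}\big)$, which rests on $\overline{\mu_1\times\mu_2}=\overline{\mu_1}\times\overline{\mu_2}$ and finiteness of all the measures involved. Everything else reduces to the Fourier and norm identities assembled at the outset.
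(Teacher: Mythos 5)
Your proposal is correct and takes essentially the same route as the paper: parts (a)--(c) are handled by norm-compatible bijections of the feasible sets (the paper phrases (a),(b) as contradiction arguments and obtains the $\epsilon$-identity in (c) via Parseval and Proposition \ref{prop1}e, but the substance is identical), and part (d) matches the paper exactly --- feasibility of $\nu_1\times\nu_2$ plus multiplicativity of the total variation norm for the upper bound, and the tensor product $\varphi_1\otimes\varphi_2$ of dual optimizers together with Proposition \ref{prop1}e,f for the lower bound. One remark: in (c) you establish $\E(M_n\mu,\Lambda+n)=M_n\,\E(\mu,\Lambda)$ rather than the literal set equality $\E(\mu,\Lambda)=\E(M_n\mu,\Lambda+n)$ appearing in the statement; this is in fact all that the paper's own proof establishes as well, and it is the correct formulation, since the stated equality fails, e.g., for $\mu=\delta_0-\delta_{1/2}$, $\Lambda=\{-1,0,1\}$, $n=1$, where $\E(\mu,\Lambda)=\{\delta_0-\delta_{1/2}\}$ while $\E(M_1\mu,\Lambda+1)=\{\delta_0+\delta_{1/2}\}$.
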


\begin{proof}
	\indent
	\begin{enumerate}[(a)]
	\item
	If $\nu\in\E(\mu,\Lambda)$, then $c\nu$ is an extrapolation of $c\mu$ from $\Lambda$. Suppose $c\nu\not\in\E(c\mu,\Lambda)$. Then there exists $\sigma$ such that $\hat\sigma=c\hat\mu$ on $\Lambda$ and $\|\sigma\|<\|c\nu\|$. Thus, $\hat\sigma/c=\hat\mu$ on $\Lambda$ and $\|\sigma/c\|<\|\nu\|$, and this contradicts the assumption that $\nu\in\E(\mu,\Lambda)$. The converse follows by a similar argument. 
	
	\item
	If $\nu\in\E(\mu,\Lambda)$, then $T_y\nu$ is an extrapolation of $T_y\mu$ from $\Lambda$. Suppose $T_y\nu\not\in\E(T_y\mu,\Lambda)$. Then there exists $\sigma$ such that $\hat\sigma=(T_y\mu)^\wedge$ on $\Lambda$ and $\|\sigma\|<\|T_y\nu\|=\|\nu\|$. Then $(T_{-y}\sigma)^\wedge=\hat\mu$ on $\Lambda$ and $\|T_{-y}\sigma\|=\|\sigma\|<\|\nu\|$, which contradicts the assumption that $\nu\in\E(\mu,\Lambda)$. The converse follows by a similar argument. 
	
	\item
	If $f\in U(\T^d;\Lambda)$, then $M_nf\in U(\T^d;\Lambda+n)$. By Parseval's theorem,
	\[
	\<f,\mu\>
	=\sum_{m\in\Lambda} \hat f(m)\overline{\hat\mu(m)}
	=\sum_{m\in\Lambda+n} (M_nf)^\wedge(m) \overline{(M_n\mu)^\wedge(m)}
	=\<M_nf,M_n\mu\>.
	\]
	Using Proposition \ref{prop1}e, we see that $\epsilon(\mu,\Lambda)=\epsilon(M_n\mu,\Lambda+n)$. 
	
	If $\nu\in\E(\mu,\Lambda)$, then $M_n\nu$ is an extrapolation of $M_n\mu$ from $\Lambda+n$, and $\|M_n\|=\|\nu\|=\epsilon(\mu,\Lambda)=\epsilon(M_n\mu,\Lambda+n)$. The converse follows similarly. 
	
	\item
	For convenience, let $\mu=\mu_1\times\mu_2$, $\nu=\nu_1\times\nu_2$, $\Lambda=\Lambda_1\times\Lambda_2$, $\epsilon_j=\epsilon(\mu_j,\Lambda_j)$, and $\epsilon=\epsilon(\mu,\Lambda)$. Since $\nu$ is an extrapolation of $\mu$, by Proposition \ref{prop3}, we  have 
	\[
	\epsilon\leq \|\nu\|=\|\nu_1\|\|\nu_2\|=\epsilon_1\epsilon_2.
	\]
	To see the reverse inequality, we use Proposition \ref{prop1}f, and see that there exist $\varphi_j\in U(\T^{d_j};\Lambda_j)$ such that $\epsilon_j=\<\varphi_j,\mu_j\>$, for $j=1,2$. Let $\varphi=\varphi_1\otimes\varphi_2$ and observe that $\varphi\in U(\T^d;\Lambda)$. By Proposition \ref{prop1}e, we have
	\[
	\epsilon
	=\max_{f\in U(\T^d;\Lambda)} |\<f,\mu\>|
	\geq |\<\varphi,\mu\>|
	\geq \<\varphi,\mu\>
	=\<\varphi_1,\mu_1\>\<\varphi_2,\mu_2\>
	=\epsilon_1\epsilon_2.
	\]
	This shows that $\epsilon=\epsilon_1\epsilon_2$, and, since $\|\nu\|=\epsilon_1\epsilon_2$, we conclude that $\nu\in\E(\mu,\Lambda)$. 
		
	\end{enumerate}
\end{proof}

While minimal extrapolation is well-behaved under translation, it not well behaved under modulation. This is because the Fourier transform of modulation is translation, and so $\hat\mu\mid_\Lambda$ and $(M_n\mu)^\wedge\mid_\Lambda$ are, in general, not equal. In contrast, the Fourier transform of translation is modulation, and so $\hat\mu\mid_\Lambda$ and $(T_y\mu)^\wedge\mid_\Lambda$ only differ by a phase factor. We shall prove these statements in Proposition \ref{false}.

\begin{proposition}
	\label{false}
	\indent
	\begin{enumerate}[(a)]
	\item
	For $j=1,2$, there exist $\mu_j\in M(\T)$, a finite subset $\Lambda\subset\Z$, and $\nu_j\in\E(\mu_j,\Lambda)$, such that $\nu_1+\nu_2\not\in\E(\mu_1+\mu_2,\Lambda)$.
	\item
	There exist $\mu\in M(\T)$, a finite subset $\Lambda\subset\Z$, $\nu\in\E(\mu,\Lambda)$, and $n\in\Z$, such that $M_n\nu\not\in\E(M_n\mu,\Lambda)$. 
	\item
	For $j=1,2$, there exist $\mu_j\in M(\T)$, a finite subset $\Lambda\subset\Z$, and $\nu_j\in\E(\mu_j,\Lambda)$, such that $\nu_1*\nu_2\not\in\E(\mu_1*\mu_2,\Lambda)$.

	\end{enumerate}
\end{proposition}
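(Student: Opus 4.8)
All three statements assert the \emph{existence} of counterexamples, so the plan is entirely constructive: I exhibit explicit measures on $\T$ (i.e. $d=1$) and verify the failures using only the admissibility bounds $\|\hat\mu\|_{\ell^\infty(\Lambda)}\le\epsilon\le\|\mu\|$ of Proposition \ref{prop3} and the frequency-shift identity $\E(\mu,\Lambda)=\E(M_n\mu,\Lambda+n)$ of Proposition \ref{symmetries}c. The unifying device for (a) and (c) is \emph{annihilation}: I arrange $\mu_1+\mu_2=0$, respectively $\mu_1*\mu_2=0$, so that the combined problem has optimal value $\epsilon=0$ and the zero measure as its unique minimal extrapolation; it then suffices to choose minimal extrapolations of the pieces whose sum, respectively convolution, is nonzero.

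For (a) I take $\Lambda=\{0\}$, where $\epsilon(\mu,\{0\})=|\hat\mu(0)|$ and, since $\|\nu\|\ge|\hat\nu(0)|$ with equality exactly when $\nu=\hat\mu(0)\rho$ for a probability measure $\rho$, the set $\E(\mu,\{0\})$ consists of all such measures. Let $\mu_1=\delta_0$ and $\mu_2=-\delta_0$; then $\nu_1=\delta_0$ and $\nu_2=-\delta_{1/2}$ are minimal extrapolations, yet $\nu_1+\nu_2=\delta_0-\delta_{1/2}$ has norm $2$ while $\epsilon(\mu_1+\mu_2,\{0\})=\epsilon(0,\{0\})=0$. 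Hence $\nu_1+\nu_2\notin\E(\mu_1+\mu_2,\Lambda)$.

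For (b) I exploit that modulation shifts the constraint set, so that $M_n\nu$ solves the problem on $\Lambda+n$ rather than on $\Lambda$; it is then enough to make $M_n\nu$ violate even the interpolation constraint on $\Lambda$. Take $\Lambda=\{0\}$, $\mu=\delta_0$, and $\nu=\delta_{1/2}$, which is a minimal extrapolation since $\hat\nu(0)=1=\hat\mu(0)$ and $\|\nu\|=1=\epsilon$. With $n=1$ one has $M_1\mu=\delta_0$ but $M_1\nu=e^{\pi i}\delta_{1/2}=-\delta_{1/2}$, whose zeroth Fourier coefficient is $-1\ne 1=(M_1\mu)^\wedge(0)$; thus $M_1\nu$ is not even an extrapolation, a fortiori not a minimal one.

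Part (c) is the delicate case and the main obstacle. Because $(\nu_1*\nu_2)^\wedge=\hat\nu_1\hat\nu_2$ automatically agrees with $(\mu_1*\mu_2)^\wedge$ on $\Lambda$, the convolution is always an extrapolation of $\mu_1*\mu_2$, so the failure can only be non-minimality of the norm. I annihilate by choosing $\mu_1=\delta_0-\delta_{1/2}$ and $\mu_2=\delta_0+\delta_{1/2}$, whose Fourier transforms are supported on the odd and even integers respectively; hence $\hat\mu_1\hat\mu_2\equiv 0$, $\mu_1*\mu_2=0$, and $\epsilon(\mu_1*\mu_2,\Lambda)=0$ with unique minimal extrapolation $0$. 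The real difficulty is that the obvious extrapolations $\nu_j=\mu_j$ convolve back to $0$, so I must find genuinely different minimal extrapolations whose Fourier transforms share a common nonzero frequency off $\Lambda$. Using $\Lambda=\{0,1\}$ (where Proposition \ref{prop3} gives $\epsilon(\mu_j,\Lambda)=2$ since $\|\hat\mu_j\|_{\ell^\infty(\Lambda)}=2=\|\mu_j\|$), I would take $\nu_1=i\delta_{1/4}-i\delta_{3/4}$ and $\nu_2=\tfrac{2}{3}(\delta_0+\delta_{1/3}+\delta_{2/3})$; each agrees with $\mu_j$ on $\Lambda$ and has norm $2$, so both are minimal. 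A short computation gives $\hat\nu_1(3)=-2$ and $\hat\nu_2(3)=2$, whence $(\nu_1*\nu_2)^\wedge(3)=-4\ne 0$ and $\nu_1*\nu_2\ne 0=\mu_1*\mu_2$. Therefore $\nu_1*\nu_2\notin\E(\mu_1*\mu_2,\Lambda)$, and the only genuine verification required is that these particular $\nu_j$ are minimal and that their spectra overlap away from $\Lambda$.
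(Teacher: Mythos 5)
Your proof is correct. For parts (a) and (b) it is essentially the paper's strategy --- annihilation $\mu_1+\mu_2=0$ for (a), and for (b) a modulation that destroys membership in $\E$ --- except that you use simpler, self-contained examples on $\Lambda=\{0\}$, whereas the paper works on $\Lambda=\{-1,0,1\}$ and imports the minimal extrapolations from Examples \ref{e1} and \ref{e2}; also, in (b) you arrange for $M_1\nu$ to fail even the interpolation constraint, while the paper instead invokes the uniqueness of the minimal extrapolation of $\delta_0-\delta_{1/2}$ from Example \ref{e2}. Part (c) is where you genuinely diverge, and to your advantage: the paper also annihilates, with $\mu_1=\delta_0+\delta_{1/2}$, $\mu_2=\delta_0-\delta_{1/2}$, but then takes $\nu_j=\mu_j$ and asserts $\|\nu_1*\nu_2\|=\|\delta_0-\delta_1\|>0$. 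On $\T=\R/\Z$ this is incorrect, since $\delta_1=\delta_0$, so that $\nu_1*\nu_2=\mu_1*\mu_2=0$ (equivalently, $\hat\mu_1(m)\hat\mu_2(m)=(1+(-1)^m)(1-(-1)^m)\equiv 0$); the paper's pair would only work on $\R$, where there is no wrap-around. You identify exactly this pitfall --- the obvious minimal extrapolations convolve back to zero --- and repair it by choosing $\nu_1=i\delta_{1/4}-i\delta_{3/4}$ and $\nu_2=\tfrac{2}{3}(\delta_0+\delta_{1/3}+\delta_{2/3})$, whose transforms agree with $\hat\mu_1$, $\hat\mu_2$ on $\Lambda=\{0,1\}$, have norm $2=\epsilon(\mu_j,\Lambda)$ by Proposition \ref{prop3}, and overlap at the frequency $3\notin\Lambda$, giving $(\nu_1*\nu_2)^\wedge(3)=-4\neq 0$ while $\epsilon(\mu_1*\mu_2,\Lambda)=0$. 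Your computations check out, so your argument for (c) is not merely an alternative route: it fixes a genuine error in the paper's own verification.
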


\begin{proof}
	\indent
	\begin{enumerate}[(a)]
	\item
	Let $\mu_1=\delta_0+\delta_{1/2}$, $\mu_2=-\delta_0-\delta_{1/2}$, and $\Lambda=\{-1,0,1\}$. By Example \ref{e1}, we have $\nu_1=\delta_0+\delta_{1/2}\in\E(\mu_1,\Lambda)$, and $\nu_2=-\delta_{1/4}-\delta_{3/4}\in\E(\mu_2,\Lambda)$. Then, $\mu_1+\mu_2=0$, and so $\epsilon(\mu_1+\mu_2,\Lambda)=0$. However, $\nu_1+\nu_2\not\in\E(\mu_1+\mu_2,\Lambda)$ because $\|\nu_1+\nu_2\|=\|\delta_0-\delta_{1/4}+\delta_{1/2}-\delta_{3/4}\|>0$. 
		
	\item
	Let $\mu=\delta_0+\delta_{1/2}$, $\Lambda=\{-1,0,1\}$, and $n=-1$. By Example \ref{e1}, we have $\nu=\delta_{1/4}+\delta_{3/2}\in\E(\mu,\Lambda)$. However, $M_{-1}\mu=\delta_0-\delta_{1/2}$, and by Example \ref{e2}, $\E(M_{-1}\mu,\Lambda)=\{\delta_0-\delta_{1/2}\}$. Thus, $M_{-1}\nu\not\in\E(M_{-1}\mu,\Lambda)$.
		
	\item
	Let $\mu_1=\delta_0+\delta_{1/2}$, $\mu_2=\delta_0-\delta_{1/2}$, and $\Lambda=\{-1,0,1\}$. Then $(\mu_1*\mu_2)^\wedge=0$ on $\Lambda$, which implies $\epsilon(\mu_1*\mu_2,\Lambda)=0$. By Examples \ref{e1} and \ref{e2}, $\nu_1=\mu_1\in\E(\mu_1,\Lambda)$ and $\nu_2=\mu_2\in\E(\mu_2,\Lambda)$. However, $\nu_1*\nu_2\not\in\E(\mu_1*\mu_2,\Lambda)$ because $\|\nu_1*\nu_2\|=\|\delta_0-\delta_1\|>0$. 
		
	\end{enumerate}
\end{proof}

\section{Examples}
\label{examples}

\subsection{Discrete measures}

There are several reasons why we are interested in computing the minimal extrapolations of discrete measures. They are the simplest types of measures, and so, their minimal extrapolations can be computed rather easily. By Theorem \ref{thm BL}, the minimal extrapolations of a non-discrete measure are sometimes discrete measures, so they appear naturally in our analysis. As discussed in Section \ref{section approach}, examples of $\mu$ that have minimal extrapolations supported in a lattice can be interpreted in the context of deterministic compressed sensing. Examples \ref{e1}-\ref{e4} and Example \ref{e5} can be written in the context of compressed sensing.

\begin{remark}
	In view of Proposition \ref{symmetries}\ a,b, and without loss of generality, we can assume any discrete measure $\mu=\sum_{k=1}^\infty a_k\delta_{x_k}\in M(\T^d)$, where $\sum_{k=1}^\infty |a_k|<\infty$, can be written as $\mu=\delta_0+\sum_{k=2}^\infty a_k'\delta_{x_k'}\in M(\T^d)$, where $\sum_{k=2}^\infty |a_k'|<\infty$. 
\end{remark}

\begin{example}
	\label{e1}
	Let $\mu=\delta_0+\delta_{1/2}$, and $\Lambda=\{-1,0,1\}$. We have $\hat\mu(0)=2$, and $\hat\mu(\pm 1)=0$. Clearly $\|\hat\mu\|_{\ell^\infty(\Lambda)}=\|\mu\|=2$. By Proposition \ref{prop3}, $\epsilon=\|\hat\mu\|_{\ell^\infty(\Lambda)}=\|\mu\|=2$, which implies $\mu\in\E$. 
	
	Further, there is an uncountable number of discrete minimal extrapolations. To see this, for each $y\in\T$ and any integer $K\geq 2$, define 
	\[
	\nu_{y,K}=\frac{2}{K}\sum_{k=0}^{K-1}\delta_{y+\frac{k}{K}}.
	\] 
	A straightforward calculation shows that $\nu_{y,K}$ is an extrapolation and that $\|\nu_{y,K}\|=\epsilon$. 
	
	Also, we can construct positive absolutely continuous minimal extrapolations. One example is the constant function $f\equiv 2$ on $\T$. For other examples, let $N\geq 2$ and let $F_N\in C^\infty(\T)$ be the Fej\'{e}r kernel,
	\[
	F_N(x)=\sum_{n=-N}^N \(1-\frac{|n|}{N+1}\)e^{2\pi inx}.
	\]
	For any $c>0$ such that $c\leq (2N+2)/(3N+1)$, extend $\hat\mu\mid_\Lambda$ to the sequence $\{(a_{N,c})_m\colon m\in\Z\}$, where 
	\[
	(a_{N,c})_m=
	\begin{cases}
	2 &m=0, \\
	c\(1-\frac{|m|}{N+1}\) &2\leq|m|\leq N, \\
	0 &\text{otherwise}.
	\end{cases}
	\]
	Consider the real-valued function
	\begin{align*}
	f_{N,c}(x)
	&=2+\sum_{n=-N}^{-2} (a_{N,c})_n e^{2\pi inx}+\sum_{n=2}^N (a_{N,c})_ne^{2\pi inx} \\
	&=2+c\sum_{n=-N}^{-2} \(1-\frac{|n|}{N+1}\) e^{2\pi inx}+c\sum_{n=2}^N \(1-\frac{|n|}{N+1}\)e^{2\pi inx}. 
	\end{align*}
	We check that $\hat{f_{N,c}}(m)=(a_{N,c})_m$ for all $m\in\Z$, which implies $f_{N,c}$ is an extrapolation of $\mu$. Using the upper bound on $c$, we have, for all $x\in\T$, that
	\[
	\quad 2\geq c+2c\(1-\frac{1}{N+1}\)\cos(2\pi x)=c+c\(1-\frac{1}{N+1}\)e^{2\pi ix}+c\(1-\frac{1}{N+1}\)e^{-2\pi ix}.
	\]
	Using this inequality, and the definitions of $F_N$ and $f_{N,c}$, we have
	\[
	f_{N,c}(x)
	\geq c F_N
	\geq 0.
	\]
	Since $f_{N,c}\geq 0$, we also have
	\[
	\|f_{N,c}\|_1
	=\int_\T f_{N,c}(x)\ dx
	=2+\int_\T \(\sum_{n=-N}^{-2} (a_{N,c})_n e^{2\pi inx}+\sum_{n=2}^N (a_{N,c})_ne^{2\pi inx}\)\ dx
	=2
	=\epsilon.
	\]
	Thus, for any $N\geq 2$ and $c\leq (2N+2)/(3N+1)$, $f_{N,c}$ is a \emph{positive absolutely continuous} minimal extrapolation. Hence, we have constructed an uncountable number of positive absolutely continuous minimal extrapolations. 
	
\end{example}

\begin{example}
	\label{e2}
	Let $\mu=\delta_0-\delta_{1/2}$, and $\Lambda=\{-1,0,1\}$. We have $\hat\mu(0)=0$, and $\hat\mu(\pm 1)=2$. Further, we have $\|\hat\mu\|_{\ell^\infty(\Lambda)}=\|\mu\|=2$, so that by Proposition \ref{prop3}, we have $\epsilon=\|\hat\mu\|_{\ell^\infty(\Lambda)}=\|\mu\|=2$ and $\mu\in\E$. 
	
	Consequently, $\Gamma=\{-1,1\}$. By Theorem \ref{thm BL}b, there exists $\alpha_{-1,1}\in\R/\Z$ satisfying
	\[
	e^{2\pi i\alpha_{-1,1}}
	=\frac{\hat\mu(-1)}{\hat\mu(1)}
	=1,
	\]
	and the minimal extrapolations are supported in the set $\{x\in\T\colon 2x\in\Z\}=\{0,1/2\}$. This implies each $\nu\in\E$ is discrete and can be written as $\nu=a_1\delta_0+a_2\delta_{1/2}$. In theory, $a_1,a_2$ depend on $\nu$, so we cannot conclude uniqueness yet. 
	
	The matrix $E$ from (\ref{matrix}) is
	\[
	E(-1,0,1;0,1/2)=
	\begin{pmatrix}
	1 &-1 \\ 1 &1 \\ 1 &-1
	\end{pmatrix}.
	\]
	Clearly, $E$ has full column rank, so, by Proposition \ref{uniqueness}, $\mu$ is the unique minimal extrapolation.  Thus, super-resolution reconstruction of $\mu$ from $\Lambda$ is possible.
\end{example}

\begin{example}
	\label{e3}
	Let $\mu=\delta_0-\delta_{1/4}$, and let $\Lambda=\{-1,0,1\}$. We have $\hat\mu(\pm 1)=1\pm i=\sqrt 2e^{\pm\pi i/4}$, and $\hat\mu(0)=0$. Note that $\|\hat\mu\|_{\ell^\infty(\Lambda)}=\sqrt 2<2=\|\mu\|$, which shows that $\sqrt 2\leq \epsilon\leq 2$. We claim that $\epsilon=\sqrt 2$. To see this, consider $\nu=(-\delta_{3/8}+\delta_{7/8})/\sqrt 2$. We verify that $\|\nu\|=\sqrt 2$ and that $\nu$ is an extrapolation. By Proposition \ref{prop3}, $\epsilon=\sqrt 2$ and $\nu\in\E$. This also implies $\mu\not\in\E$, and so super-resolution reconstruction of $\mu$ from $\Lambda$ is impossible.
	
	We claim that $\nu$ is the unique minimal extrapolation. The matrix $E$ from (\ref{matrix}) is 
	\[
	E(-1,0,1;3/8,7/8)=
	\begin{pmatrix}
	e^{2\pi i 3/8} &e^{2\pi i 7/8} \\
	1 &1 \\
	e^{-2\pi i3/8} &e^{-2\pi i7/8}
	\end{pmatrix},
	\]
	which we observe to have full column rank. By Proposition \ref{uniqueness}, we conclude that $\nu$ is the unique minimal extrapolation. Thus, super-resolution reconstruction of $\nu$ from $\Lambda$ is possible.
	
	We explain the derivation of $\nu$. We guess that $\epsilon=\sqrt 2$ and see what Theorem \ref{thm BL}b implies. Under this assumption that $\epsilon=\sqrt 2$, we have $\Gamma=\{-1,1\}$. By Theorem \ref{thm BL}b, there exists $\alpha_{-1,1}\in\R/\Z$ satisfying 
	\[
	e^{2\pi i\alpha_{1,-1}}
	=\frac{\hat\mu(1)}{\hat\mu(-1)}
	=e^{\pi i/2},
	\]
	and the minimal extrapolations are supported in $\{x\in\T\colon 2x+1/4\in\Z\}=\{3/8,7/8\}$. Hence, if $\epsilon=\sqrt 2$, then every $\sigma\in\E$ is of the form $\sigma=a_1\delta_{3/8}+a_2\delta_{7/8}$. Thus, by definition of a minimal extrapolation, $\|\sigma\|=\sqrt 2$ and $\hat\mu=\hat\sigma$ on $\Lambda$. Using this information, we solve for the coefficients $a_1,a_2$, and compute that $|a_1|=|a_2|=\sqrt 2$, $a_1=-a_2$, and $a_1=-\sqrt 2/2$. Thus, we obtain that $\sigma=(-\delta_{3/8}+\delta_{7/8})/\sqrt 2$. From here, we simply check that $\nu=\sigma$ is, in fact, a minimal extrapolation. 
\end{example}

\begin{example}
	\label{e4}
	Let $\mu=\delta_0+e^{\pi i/3}\delta_{1/3}$ and let $\Lambda=\{-1,0,1\}$. We have $\hat\mu(-1)=0$, $\hat\mu(0)=1+e^{\pi i/3}=\sqrt 3 \ e^{\pi i/6}$, and $\hat\mu(1)=1+e^{-\pi i/3}=\sqrt 3 \ e^{-\pi i/6}$. 
	
	Suppose, for the purpose of obtaining a contradiction, that $\epsilon=\|\hat\mu\|_{\ell^\infty(\Lambda)}=\sqrt 3$. Then $\Gamma=\{0,1\}$. By Theorem \ref{thm BL}b, there is $\alpha_{0,1}\in\R/\Z$ such that 
	\[
	e^{2\pi i\alpha_{0,1}}
	=\frac{\hat\mu(0)}{\hat\mu(1)}
	=e^{\pi i/3},
	\]
	and each $\nu\in\E$ is of the form $\nu=a\delta_{1/6}$ for some $a\in\C$. Then, $|\hat\nu|=|a|$ on $\Z$ and, in particular, $\hat\mu\not=\hat\nu$ on $\Lambda$, which is a contradiction. 
	
	Thus, $\epsilon>\sqrt 3$, i.e., $\Gamma=\emptyset$. Therefore, Theorem \ref{thm BL}a applies, and so there is a finite set $S$ such that $\supp(\nu)\subset S$ for each $\nu\in\E$. In particular, each $\nu\in\E$ is discrete. Hence, we have to solve the optimization problem given in Proposition \ref{prop1}e, which is
	\[
	\epsilon=\max\Big\{ \Big| a\sqrt 3\ e^{\pi i/6}+b\sqrt 3\ e^{-\pi i/6} \Big|\colon \forall x\in\T,\ 
	|ae^{2\pi ix}+b+ce^{-2\pi ix}|\leq 1,\ a,b,c\in\C \Big\}.
	\] 
	This optimization problem can be written as a semi-definite program, see \cite[Corollary 4.1, page 936]{candes2014towards}. After obtaining numerical approximations to the optimizers of this problem, we guess that the the exact optimizers are
	\[
	a=\frac{2}{3\sqrt 3}e^{-\pi i/6},\quad
	b=\frac{4}{3\sqrt 3}e^{\pi i/6},\quad
	c=-\frac{i}{3\sqrt 3}.
	\]
	These values of $a,b,c$ are, in fact, the optimizers because $a\sqrt 3\ e^{\pi i/6}+b\sqrt 3\ e^{-\pi i/6}=2$ and $|ae^{2\pi ix}+b+ce^{-2\pi ix}|\leq 1$ for all $x\in\T$. Thus, $\epsilon=2$ and $\mu\in\E$. Since $|ae^{2\pi ix}+b+ce^{-2\pi ix}|=1$ precisely on $S=\{0,1/3\}$, by Theorem \ref{thm BL}a, the minimal extrapolations are supported in $S$. 
	
	The matrix $E$ from (\ref{matrix}) is 
	\[
	E(-1,0,1;0,1/3)=
	\begin{pmatrix}
	1 &e^{-2\pi i/3} \\ 1 &1\\ 1 &e^{2\pi i/3}
	\end{pmatrix}.
	\]
	Since $E$ has full rank, by Proposition \ref{uniqueness}, $\mu$ is the unique minimal extrapolation. Thus, super-resolution reconstruction of $\mu$ from $\Lambda$ is possible. 
\end{example}

The following example illustrates that if $\mu$ is a sum of two Dirac measures, then their supports have to be sufficiently spaced apart in order for super-resolution of $\mu$ to be possible. This shows that, in general, a minimum separation condition is necessary to super-resolve a sum of two Dirac measures, see \cite{candes2014towards}. 

\begin{example}
	\label{e6}
	Let $\mu_y=\delta_0-\delta_y$ for some non-zero $y\in\T^d$ and let $\Lambda\subset\Z^d$ be a finite subset. We claim that if $y$ is sufficiently small depending on $\Lambda$, then $\mu_y\not\in\E(\mu_y,\Lambda)$. Note that $\|\mu_y\|=2$ for any $y\in\T^d$. Let $\eta$ denote the normalized Lebesgue measure on $\T^d$ and define the measures $\nu_y$ by the formula
	\[
	\nu_y(x)=\sum_{m\in\Lambda}\hat{\mu_y}(m)e^{2\pi im\cdot x}\ \eta(x).
	\]
	By construction, $\nu_y$ is an extrapolation of $\mu_y$ because, for each $n\in\Lambda$,
	\[
	\hat{\nu_y}(n)
	=\int_{\T^d}e^{-2\pi in\cdot x}\ d\nu_y(x)
	=\sum_{m\in\Lambda}\hat{\mu_y}(m)\int_{\T^d} e^{-2\pi i(n-m)\cdot x}\ dx
	=\hat{\mu_y}(n).
	\]
	Then, as $y\to 0$, 
	\[
	\|\nu_y\|
	=\int_{\T^d} \Big|\sum_{m\in\Lambda} \hat{\mu_y}(m)e^{2\pi im\cdot x}\Big| \ dx\
	=\int_{\T^d} \Big|\sum_{m\in\Lambda} (1-e^{-2\pi im\cdot y}) e^{2\pi im\cdot x}\Big| \ dx
	\to 0.
	\]
	Thus, we take $y$ sufficiently small so that $\|\nu_y\|<2=\|\mu_y\|$, and, hence, $\mu_y\not\in\E(\mu_y,\Lambda)$. Note that this argument does not contradict Proposition \ref{prop3} because $\|\hat{\mu_y}\|_{\ell^\infty(\Lambda)}\to 0$ as $y\to 0$. Thus, for $y$ sufficiently small, super-resolution reconstruction of $\mu_y$ from $\Lambda$ is impossible. 
\end{example}

\subsection{Singular continuous measures}
The following example is an analogue of Example \ref{e1} for higher dimensions.

\begin{example}
	\label{e5}
	Let $\mu=\delta_{(0,0)}+\delta_{(1/2,1/2)}$, and $\Lambda=\{-1,0,1\}^2\setminus\{(1,-1),(-1,1)\}$. Then,  $\hat\mu(m)=1+e^{-\pi i(m_1+m_2)}$, and, in particular, $\hat\mu(1,1)=\hat\mu(-1,-1)=\hat\mu(0,0)=2$ and $\hat\mu(\pm 1,0)=\hat\mu(0,\pm 1)=0$. We deduce that $\epsilon=\|\mu\|=\|\hat\mu\|_{\ell^\infty(\Lambda)}=2$ from Proposition \ref{prop3}, and so $\mu\in\E$. 
	
	Further, $\Gamma=\{(0,0),(1,1),(-1,-1)\}$, and so $\#\Gamma=3$. According to the definition of $\alpha_{m,n}$ in Theorem \ref{thm BL}b, set $\alpha_{(-1,-1),(0,0)}=\alpha_{(0,0),(1,1)}=\alpha_{(-1,-1),(1,1)}=0$. By the conclusion of Theorem \ref{thm BL}b, the minimal extrapolations are supported in the set $S=S_{(-1,-1),(0,0)}\cap S_{(0,0),(1,1)}\cap S_{(-1,-1),(1,1)}$, where
	\begin{align*}
	S_{(-1,-1),(0,0)}&=\{x\in\T^2\colon x\cdot (-1,-1)\in\Z\}=\{x\in\T^2\colon x_1+x_2\in\Z\}, \\
	S_{(0,0),(1,1)}&=\{x\in\T^2\colon x\cdot (-1,-1)\in\Z\}=\{x\in\T^2\colon x_1+x_2\in\Z\}, \\
	S_{(-1,-1),(1,1)}&=\{x\in\T^2\colon x\cdot (-2,-2)\in\Z\}=\{x\in\T^2\colon 2x_1+2x_2\in\Z\}.
	\end{align*}
	It follows that the minimal extrapolations are supported in
	\[
	S
	=S_{(-1,-1),(0,0)}\cap S_{(0,0),(1,1)}\cap S_{(-1,-1),(1,1)}
	=\{x\in\T^2\colon x_1+x_2=1\}.
	\]
	
	We can construct other discrete minimal extrapolations besides $\mu$. For each $y\in\T$ and for each integer $K\geq 2$, define the measure
	\[
	\nu_{y,K}=\frac{2}{K}\sum_{k=0}^{K-1} \delta_{\big(y+\frac{k}{K},1-y-\frac{k}{K}\big)}.
	\]
	We claim $\nu_{y,K}$ is a minimal extrapolation. We have $\|\nu_{y,K}\|=\epsilon$, and 
	\[
	\hat{\nu_{y,K}}(m)
	=e^{-2\pi i(m_1y-m_2y)}e^{-2\pi im_2}\frac{2}{K}\sum_{k=0}^{K-1} e^{-2\pi i(m_1-m_2)k/K}. 
	\]
	We see that $\hat{\nu_{y,K}}=\hat\mu$ on $\Lambda$, which proves the claim. 
	
	We can also construct continuous singular minimal extrapolations. Let $\sigma=\sqrt 2\sigma_S$, where $\sigma_S$ is the \emph{surface measure} of the Borel set $S$. We readily verify that $\|\sigma\|=\epsilon$ and 
	\[
	\hat\sigma(m)
	=\sqrt 2\int_{\T^2} e^{-2\pi im\cdot x}\ d\sigma_S
	=2e^{-2\pi im_2}\int_0^1 e^{-2\pi i(m_1-m_2)t}\ dt
	=2\delta_{m_1,m_2},
	\] 
	which proves that $\sigma\in\E$. In particular, $S$ is the smallest set that contains the support of all the minimal extrapolations. 
	
	Since $\mu$ is not the unique minimal extrapolation, super-resolution reconstruction of $\mu$ from $\Lambda$ is impossible. 
\end{example}

\begin{example}
	For an integer $q\geq 3$, let $C_q$ be the \emph{middle $1/q$-Cantor set}, which is defined by $C_q=\bigcap_{k=0}^\infty C_{q,k}$, where $C_{q,0}=[0,1]$ and 
	\[
	C_{q,k+1}=\frac{C_{q,k}}{q}\cup \((1-q)+\frac{C_{q,k}}{q}\).
	\]
	Let $F_q\colon[0,1]\to[0,1]$ be the \emph{Cantor-Lebesgue} function on $C_q$, which is defined by the point-wise limit of the sequence $\{F_{q,k}\}$, where $F_{q,0}(x)=x$ and
	\[
	F_{q,k+1}(x)=
	\begin{cases}
	\ \frac{1}{2}F_{q,k}(qx) &0\leq x\leq\frac{1}{q}, \\
	\ \frac{1}{2}			 &\frac{1}{q}\leq x\leq\frac{q-1}{q}, \\
	\ \frac{1}{2}F_{q,k}(qx-(q-1))+\frac{1}{2} &\frac{q-1}{q}\leq x\leq 1.
	\end{cases}
	\]
	By construction, $F_q(0)=0$, $F_q(1)=1$, and $F_q$ is non-decreasing and uniformly continuous on $[0,1]$. Thus, $F_q$ can be uniquely identified with the measure $\sigma_q\in M(\T)$, and $\|\sigma_q\|=1$. Since $F_q'=0$ a.e. and $F_q$ does not have any jump discontinuities, $\sigma_q$ is a continuous singular measure, with zero discrete part. The Fourier coefficients of $\sigma_q$  are
	\[
	\hat{\sigma_q}(m)
	=(-1)^m\prod_{k=1}^\infty \cos(\pi m q^{-k}(1-q)),
	\]
	see \cite[pages 195-196]{zygmund2002trigonometric}. In particular, for any integer $n\geq 1$, we have
	\[
	\hat{\sigma_q}(q^n)
	=(-1)^{q^n}\prod_{k=1}^\infty \cos(\pi q^{-k}(1-q)),
	\]
	which is convergent and independent of $n$. Since $\hat{\sigma_q}(0)=\|\sigma_q\|=1$, we immediately see that $\epsilon=1$ and $\sigma_q\in\E$. Again, we cannot determine whether $\sigma_q$ is the unique minimal extrapolation because Theorem \ref{thm BL} cannot handle the case $\#\Gamma=1$, see Remark \ref{missing}.
\end{example}

\begin{example}
	Let $\sigma_A,\sigma_B\in M(\T^d)$ be the surface measures of the Borel sets $A=\{x\in\T^2\colon x_2=0\}$ and $B=\{x\in\T^2\colon x_2=1/2\}$, respectively. Let $\mu=\sigma_A+\sigma_B$, and $\Lambda=\{-2,-1,\dots,1\}^2$. Then,
	\[
	\hat\mu(m)
	=\int_0^1 e^{2\pi im_1t}\ dt+\int_0^1 e^{2\pi i(m_1t+m_2/2)}\ dt
	=\delta_{m_1,0}+(-1)^{m_2}\delta_{m_1,0}.
	\]
	We immediately see that $\epsilon=\|\hat\mu\|_{\ell^\infty(\Lambda)}=\|\mu\|=2$, which implies $\mu\in\E$. Then, $\Gamma=\{(0,0),(0,2),(0,-2)\}$, and, by Theorem \ref{thm BL}b, the minimal extrapolations are supported in $\{x\in\T^2\colon x_2=0\}\cup\{x\in\T^2\colon x_2=1/2\}$. Determining whether $\mu$ is the unique minimal extrapolation is beyond the theory we have developed herein, and we shall examine this uniqueness problem in \cite{benli2}. 
\end{example}

\section{Acknowledgements}
The first named author gratefully acknowledges the support of ARO Grant W911 NF-15-1-0112, ARO Grant W911 NF-16-1-0008, and DTRA Grant 1-13-1-0015.

\nocite{kahane1970series, rudin1990fourier, au2013generalized, katsaggelos2007super, hille1929remarks, salem1942singular, benedetto1975spectral, slepian1978prolate, chandrasekaran2012convex}
\bibliography{SRBeurlingME}
\bibliographystyle{alpha}

\end{document}